\newtheorem{Theorem}{Theorem}
\newtheorem{Lemma}{Lemma}
\newtheorem{Definition}{Definition}
\newtheorem{Conjecture}{Conjecture}
\newtheorem{Corollary}{Corollary}
\newcommand{\Dq}{\mathrm{maxdeg}_q}
\newcommand{\Da}{\mathrm{maxdeg}_a}
\newcommand{\la}{\lambda}
\newcommand{\ga}{\gamma}
\newcommand{\Ga}{\Gamma}
\newcommand{\rot}{\mathrm{rot}}
\newcommand{\lag}{\langle}
\newcommand{\rag}{\rangle}
\newcommand{\e}{\epsilon}
\title{The degree of the colored HOMFLY polynomial}
\author{Roland van der Veen}
\begin{document}

\maketitle

\abstract{The colored HOMLFY polynomial is an important knot invariant depending on two variables $a$ and $q$. 
We give bounds on the degree in both $a$ and $q$ generalizing Morton's bounds \cite{Mo86} for the ordinary HOMFLY polynomial.
Our bounds suggest that the degree detects certain incompressible surfaces in the knot complement and perhaps more generally
features of the $SL(N)$ character varieties of the knot group. We formulate a precise conjecture along these lines generalizing the slope 
conjecture of Garoufalidis \cite{Ga11}. We prove our conjecture for all positive knots.

Our technique is a reformulation of the MOY state sum \cite{MOY98} using $q$-analogues of Ehrhart polynomials.
As a direct application we explicitly compute the $r$ coefficients of $r$-colored HOMFLY polynomial of any positive braid. 
}

\section{Introduction}

The colored HOMFLY polynomial is an important invariant in knot theory. It has many intriguing connections to string theory contact homology
integrable systems Gromov-Witten invariants. Also much effort was directed to understanding its categorification. 
Given all this interest it is rather surprising that not much seems to be known about some of its simplest properties such as its \emph{degree}. 
Even simple bounds on the maximum degree seem to be unavailable in the literature. The purpose of this paper
is to provide such bounds and to show that the degree is actually an interesting invariant already.

The version of the colored HOMFLY polynomial we are discussing unifies all the Reshetikhin-Turaev quantum $\mathfrak{sl}_N$ invariants of knots colored with
symmetric powers of the standard representation. Our notation is $P_r(L;a,q)$ for the unreduced $r$-colored HOMFLY polynomial of 
$L$ where each component of $L$ is colored by the $r$-th symmetric power.
Setting $a = q^N$ yields the corresponding $\mathfrak{sl}_N$ invariant. In particular we obtain the $r$
colored Jones polynomial for $N=2$.

Despite its name $P_r(L;a,q)$ is not exactly a two variable polynomial. It is rather a rational function in $q$ and $a$. As such it still
makes sense to talk about its maximal degree in $q$. Indeed we can always expand in a Laurent series in $q^{-1}$. By definition this has a highest degree
term in $q$ and this then is the maximal degree in $q$, notation $\Dq$. The same goes for the variable $a$. 

\begin{Theorem}
\label{Thm.Bounds}
Let $D$ be any oriented link diagram $D$. The number of positive and negative crossings are denoted $c_\pm$ and the number 
of positively/negatively oriented Seifert circles is $s_\pm$. 
\begin{enumerate}
\item[a.] $\Da P_r(D;a,q)\leq \frac{r}{2}(-c_+ + c_- + s_+ + s_-)$
\item[q.] $\Dq P_r(D;a,q)\leq \frac{r}{2}(s_+ - s_- + c_+ + c_-(2r-1))$
\item[p.] For positive diagrams $\frac{r}{2}(-s_+ -s_- + c_+) \leq \Dq P_r(D;a,q)$
\end{enumerate}
\end{Theorem}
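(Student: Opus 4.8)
The plan is to evaluate $P_r(D;a,q)$ by the MOY state sum and then to extract each degree by bounding the contribution of a single state. First I would resolve every crossing of $D$ using the symmetric-power skein relation, so that $P_r(D;a,q) = \sum_s w_s(a,q)\,\lag \Ga_s\rag$, where $s$ runs over the states (a choice of resolution at each crossing), $w_s$ is the product of the local crossing weights, and $\lag\Ga_s\rag$ is the MOY evaluation of the oriented trivalent graph $\Ga_s$ obtained after resolving. The essential structural fact is that $a$ occurs only in the weights $w_s$, while $\lag\Ga_s\rag$ is a rational function of $q$ alone; this cleanly separates the two degree computations.

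For part (a) the $a$-degree is therefore governed entirely by $\max_s \Da w_s$. Recording the skein weights after framing normalization, each positive crossing contributes an $a$-interval topping out at $-r/2$ and each negative crossing an interval topping out at $+r/2$, while the closure of the $s_+ + s_-$ Seifert circles contributes $+r/2$ apiece through the unknot normalization. Summing the maxima gives $\frac{r}{2}(-c_+ + c_- + s_+ + s_-)$, which is the stated bound; this step is bookkeeping once the weights are tabulated.

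For part (q) I would split $\Dq\big(w_s\,\lag\Ga_s\rag\big)$ into $\Dq w_s$ and $\Dq\lag\Ga_s\rag$. The weights contribute $\frac{r}{2}\big(c_+ + (2r-1)c_-\big)$, the asymmetry in $r$ reflecting the larger span of the negative R-matrix eigenvalues. The term $\Dq\lag\Ga_s\rag$ is where the Ehrhart reformulation does the work: $\lag\Ga_s\rag$ is a $q$-weighted sum over the lattice points of the flow polytope of $\Ga_s$, so $\Dq\lag\Ga_s\rag$ equals the maximum of an explicit linear functional over that polytope. This maximum is attained at the vertex where all admissible flow is routed along the Seifert circles, and evaluates to $\frac{r}{2}(s_+ - s_-)$, the sign depending on the orientation of each circle. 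Maximizing over $s$ yields the bound in part (q).

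Part (p) is the delicate one, since bounding each summand from above never precludes cancellation, whereas a lower bound on $\Dq$ demands a surviving term. For a positive diagram every crossing weight has the same sign pattern, so I would isolate the extremal state $s_0$---the oriented (Seifert) resolution of every crossing, for which $\Ga_{s_0}$ is the disjoint union of the Seifert circles---and show that its contribution carries a nonzero coefficient at $q$-degree $d_0 := \frac{r}{2}(-s_+ - s_- + c_+)$. Because a nonzero coefficient at degree $d_0$ forces $\Dq P_r \geq d_0$, this gives the claim. The main obstacle is precisely non-cancellation: I would argue that $s_0$ is the unique state reaching down to degree $d_0$ (every other resolution strictly raises the $q$-degree), so that its leading coefficient---again a leading $q$-Ehrhart coefficient, namely a lattice-point count on the extremal face and hence a positive integer---cannot be killed. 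Pinning down this uniqueness and the positivity of the surviving coefficient for arbitrary positive diagrams is the crux; everything else reuses the degree calculus of parts (a) and (q).
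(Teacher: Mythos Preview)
Your proposal has the right overall architecture---bound each state individually, then argue non-cancellation for part~p---but two of its structural claims are incorrect and the argument would not go through as written.

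First, the assertion that ``$a$ occurs only in the weights $w_s$, while $\lag\Ga_s\rag$ is a rational function of $q$ alone'' is false. The MOY evaluation of a colored planar graph depends essentially on $a$; already a single $r$-colored circle evaluates to a function of both $a$ and $q$. In the symmetric state sum of Theorem~\ref{Thm.SymMOY} the $a$-dependence of $[\Ga,\ga](a,q)$ sits in two places: the rotation prefactor $(q/a)^{\frac12\sum_j\rot(\e_j)}$ and the $q$-Ehrhart polynomial $E_{\rot,P_\e}(aq^{-1},-1,q)$. The actual bound on $\Da$ comes from combining the framing factor $a^{-\frac r2(c_+-c_-)}$ with Lemma~\ref{Lem.Rot}, which shows $\sum_j\rot(\e_j)=r(s_+-s_-)$ for \emph{every} state, and Lemma~\ref{Lem.EhrhartBounds}, which bounds $\Da$ of the Ehrhart term by $\max_v\rot(v)\le rs_+$. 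Your attribution of the Seifert-circle contribution to an ``unknot normalization'' hidden in $w_s$ does not match how any version of the state sum is organized, and the separation you rely on for both parts~a and~q does not exist.

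Second, for part~p your uniqueness claim fails. The Seifert resolution $i_c\equiv 0$ is indeed the unique \emph{crossing resolution} attaining the top $q$-degree, but within that resolution there are many states $\e$ (sequences of elementary flows parallel to the Seifert circles, differing in the ordering of non-adjacent flows) that hit the same top degree, and they come with alternating signs $(-1)^{|\e|}$. The paper handles this by a device you did not use: it passes to the coefficient $f(q)$ of the \emph{maximal} power of $a$. Demanding maximal $a$-degree pins down the Ehrhart contribution exactly (forcing its top $q$-term to have coefficient~$1$), so the only remaining freedom is the ordering of the flows; inclusion--exclusion over those orderings then collapses the sum to $\pm1$, hence nonzero. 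Without first restricting to the top $a$-coefficient you have no handle on the Ehrhart term sharp enough to rule out cancellation, and the positivity heuristic you invoke (``a lattice-point count, hence a positive integer'') does not apply to the signed sum that actually appears.
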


We get similar bounds for the the minimal degree by considering the mirror image $\bar{D}$ of the diagram $D$, because
$P_r(\bar{D};a,q)=P_r(D;a^{-1},q^{-1})$. These bounds also apply to the anti-symmetric version of the colored HOMFLY polynomial
because of the formula $P_{r^t}(a,q)=(-1)^rP_{r}(a,q^{-1})$ \cite{Zh13}.

These bounds should be compared to Morton's bounds \cite{Mo86} in the case $r=1$. In our notation he
proved that $\Da P_r(D;a,q)\leq \frac{1}{2}(-c_+ + c_- +s_+ + s_-)$ in perfect agreement with our bound.
For the $q$-degree Morton's upper bound is $\Dq P_r(D;a,q)\leq \frac{1}{2}(-s_+-s_- + c_+ + c_-)$. This is much sharper than our general $q$-bound and matches
our lower bound in the positive case. We conjecture there actually is equality for all positive diagrams.

Let us illustrate our bounds in the simple case of the closure of a $2$-braid $T(c,2)$ with $c$ crossings, $c\in \mathbb{Z}$ \cite{GNS15}.
$P_r(T(2,c);a,q) = $

\begin{equation}
\label{Eq.T2}
(q^{\frac{r(r-1)}{2}}a^{\frac{r}{2}})^{-c}\sum_{k=0}^rq^{\frac{c((r-k)^2-k)}{2}}\frac{(-1)^{c(r-k)}a^rq^{-k-r}(qa^{-1};q^{-1})_k(a^{-1};q^{-1})_{2r-k}}{(q^{-1};q^{-1})_k(q^{-1};q^{-1})_{2(r-k)}(q^{-2(r-k+1)};q^{-1})_k}
\end{equation}

First the maxdegree in $a$. All crossings are of the same sign so since $-c=-c_+ + c_-$. For any $k$ it is obtained we get terms of $a$-degree $\frac{r}{2}(-c+2)$ attaining our upper bound since $s=2$
Next the $q$. For $c\geq0$ we have $c=c_+$ a positive diagram with $s=2$. Looking at the $k=0$ term we find the maximal $q$ degree of $-c\frac{r(r-1)}{2}+c\frac{r}{2}-r = \frac{r}{2}(c_+ - s)$   
For $c<0$ our bound is very poor. In the formula the maxdegree is obtained by looking at the $k=r$ term. We find $-c\frac{r^2}{2}-2r+1$. This is about half of the given upper bound since $c=-c_-$.

\subsection{Topological interpretation of the $q$-degree}

As an application for our bounds we give a topological interpretation of the growth rate of the $q$-degree. For simplicity
we restrict ourselves to the knot case, see \cite{V13} for a similar conjecture in rank $1$ for links. 

Recall the knot exterior $E_K$ is defined as $\mathbb{S}^3-N(K)$, where $N(K)$ is an open tubular neighborhood of the knot $K$. $E_K$ is a
compact $3$-manifold with torus boundary and a canonical choice of a basis $\lambda,\mu$ $H_1(\partial E_K)$. A properly embedded
essential surface $(\Sigma,\partial \Sigma)\subset (E_K,\partial E_K)$ has boundary slope $p/q$ if $\partial \sigma = p\mu + q\lambda \in H_1(\partial E_K)$.
Hatcher showed every knot has only finitely many slopes \cite{Ha82}. For Montesinos knots there is an algorithm to compute these slopes.
In particular it shows that any rational number can be the slope of some knot. Culler and Shalen showed that some slopes
are the slopes of the Newton polygon of the A-polynomial of the knot. 

\begin{Definition}
For a knot $K$ define the set of HOMFLY slopes $\mathcal{S}(K)$ as follows
$\mathcal{S} = \{r^{-2}\Dq P_r(K;a,q)|r\in \mathbb{N}\}'$
where $X'$ means the set of limit points (accummulation points) of any set $X\subset \mathbb{R}$.\\
Also let $\mathcal{B}(K)$ denote the set of boundary slopes of essential surfaces as defined above.
\end{Definition}

Motivated by the AJ conjecture and the slope conjecture \cite{Ga11} for the 
colored Jones polynomial, i.e. the specialization $a=q^2$, we propose

\begin{Conjecture}(HOMFLY slope conjecture)\\
For any knot $K$ we have $4\mathcal{S}(K)\subset \mathcal{B}(K)$
\end{Conjecture}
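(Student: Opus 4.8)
The statement is a conjecture, so I do not expect to settle it in full: in general it asserts a correspondence between quantum invariants and the topology of $E_K$ of the same depth as the slope conjecture it generalizes. The plan is instead to prove the case announced in the abstract, that of positive knots, where the apparent difficulty evaporates because the whole slope set collapses to a single point. Concretely, I would show that for a positive knot $K$ one has $\mathcal{S}(K)=\{0\}$ and that $0\in\mathcal{B}(K)$, which immediately gives $4\mathcal{S}(K)=\{0\}\subset\mathcal{B}(K)$.

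First I would fix a positive diagram $D$ of $K$, so that $c_-=0$. Feeding this into the two $q$-degree estimates of Theorem~\ref{Thm.Bounds} (parts q.\ and p.) yields
\begin{equation*}
\tfrac{r}{2}(-s_+-s_-+c_+)\ \leq\ \Dq P_r(K;a,q)\ \leq\ \tfrac{r}{2}(s_+-s_-+c_+),
\end{equation*}
since the only term that could grow faster than linearly in $r$ is the $c_-(2r-1)$ contribution to the upper bound, and this vanishes. Both sides are linear in $r$, so after dividing by $r^2$ the sequence $r^{-2}\Dq P_r(K;a,q)$ is squeezed between two quantities that both tend to $0$. Hence it converges to $0$, its unique accumulation point is $0$, and $\mathcal{S}(K)=\{0\}$ regardless of how $\Dq P_r$ oscillates inside the linear band. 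The $T(2,c)$ computation above is the model case: for $c>0$ the degree is the linear expression $\tfrac{r}{2}(c_+-s)$, whereas for $c<0$ it grows like $-c\,r^2/2$, which is exactly why the non-positive case carries genuine slopes.

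It then remains to check $0\in\mathcal{B}(K)$. For a nontrivial knot a minimal-genus Seifert surface $\Sigma$ is incompressible and not boundary-parallel, hence essential in $(E_K,\partial E_K)$, and by construction $\partial\Sigma$ is a longitude, so $\partial\Sigma=\lambda$ and its boundary slope is $0/1=0$. Thus $0\in\mathcal{B}(K)$, and combining this with $4\mathcal{S}(K)=\{0\}$ finishes the positive case; the factor $4$ plays no role since it fixes $0$.

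The genuine obstacle lies entirely outside this argument. For non-positive knots the limit points of $r^{-2}\Dq P_r$ need not be $0$, and the $T(2,c)$ example with $c<0$ shows they can be nonzero rationals, so the conjecture then demands that these quantum-defined numbers actually be realized by incompressible surfaces---the same hard correspondence underlying the slope conjecture. Even producing a candidate equality would first require sharpening the $q$-degree bounds of Theorem~\ref{Thm.Bounds}, whose lower bound is currently available only in the positive case and whose upper bound is far from tight once $c_->0$, as already flagged for $T(2,c)$. I therefore expect the principal difficulty to be precisely the step that the positive hypothesis removes: pinning down the exact quadratic growth rate of the degree and matching it to an honest boundary slope.
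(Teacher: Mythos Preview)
Your proposal is correct and follows essentially the same route as the paper: use the two $q$-degree bounds from Theorem~\ref{Thm.Bounds} with $c_-=0$ to squeeze $r^{-2}\Dq P_r$ between two sequences tending to $0$, conclude $\mathcal{S}(K)=\{0\}$, and then check $0\in\mathcal{B}(K)$. The only minor difference is in this last step: the paper invokes Stallings' result that positive knots are fibered and uses the fiber surface as the essential surface with slope $0$, whereas you use the more elementary observation that a minimal-genus Seifert surface of any nontrivial knot is essential. Your route is slightly more economical here, since it does not need the fiberedness of positive knots; both arguments are standard and yield the same conclusion.
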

 
Since positive knots $K$ are fibered and the fiber surface is essential \cite{St78} with slope $0$ our
degree bounds immediately imply $\mathcal{S}(K) = \{0\}$ and hence:

\begin{Corollary}
The HOMFLY slope conjecture is true for all positive knots.
\end{Corollary}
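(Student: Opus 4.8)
The plan is to combine the two degree bounds of Theorem~\ref{Thm.Bounds} that apply to positive diagrams with the classical fact that positive knots fiber over the circle. First I would fix a positive diagram $D$ of $K$, so that every crossing is positive and $c_-=0$. With this single observation the general upper bound~(q), whose only quadratic-in-$r$ contribution is the term $\tfrac{r}{2}c_-(2r-1)$, collapses to the linear estimate $\Dq P_r(K;a,q)\le \tfrac{r}{2}(c_+ + s_+ - s_-)$, while the positive-diagram lower bound~(p) reads $\tfrac{r}{2}(c_+ - s_+ - s_-)\le \Dq P_r(K;a,q)$. Both sides are linear in $r$ with coefficients depending only on the fixed diagram, so the disappearance of quadratic growth is the crux: it is exactly the $c_-$ term in~(q) that would otherwise survive division by $r^2$, as one already sees for the negative torus knots in the example following Theorem~\ref{Thm.Bounds}.

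The second step is a squeeze. Dividing the two-sided estimate by $r^2$ yields
\[
\frac{c_+ - s_+ - s_-}{2r}\;\le\; r^{-2}\Dq P_r(K;a,q)\;\le\; \frac{c_+ + s_+ - s_-}{2r},
\]
and letting $r\to\infty$ forces $r^{-2}\Dq P_r(K;a,q)\to 0$. Consequently every accumulation point of the set $\{r^{-2}\Dq P_r(K;a,q):r\in\mathbb{N}\}$ must equal $0$, so $\mathcal{S}(K)\subseteq\{0\}$ and hence $4\mathcal{S}(K)\subseteq\{0\}$.

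The last step supplies the one piece of topology. By Stallings~\cite{St78} a positive knot is fibered, and its fiber is an essential Seifert surface whose boundary is the preferred longitude, i.e.\ $\partial\Sigma = 0\cdot\mu + 1\cdot\lambda$, a surface of boundary slope $0$. Thus $0\in\mathcal{B}(K)$, and combining this with the previous step gives $4\mathcal{S}(K)\subseteq\{0\}\subseteq\mathcal{B}(K)$, which is precisely the HOMFLY slope conjecture for $K$.

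I do not expect a serious obstacle, since all the analytic content is packaged into Theorem~\ref{Thm.Bounds}; the remaining care is bookkeeping. One must check that for a genuine diagram the quantities $c_\pm,s_\pm$ are independent of the color $r$, so that the bounds really are linear in $r$. One should also be slightly careful with the accumulation-point definition: the squeeze directly gives only $\mathcal{S}(K)\subseteq\{0\}$, and upgrading this to the equality $\mathcal{S}(K)=\{0\}$ asserted in the text requires knowing that $r^{-2}\Dq P_r$ is not eventually constant. For the Corollary itself the inclusion already suffices, so this subtlety can be sidestepped.
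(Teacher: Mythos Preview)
Your proposal is correct and follows essentially the same line as the paper: use parts~(q) and~(p) of Theorem~\ref{Thm.Bounds} with $c_-=0$ to squeeze $r^{-2}\Dq P_r$ to $0$, then invoke Stallings' fibering result to place $0$ in $\mathcal{B}(K)$. Your remark that the squeeze only yields $\mathcal{S}(K)\subseteq\{0\}$, and that this inclusion already suffices for the Corollary, is in fact a point of care that the paper's one-line justification glosses over.
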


Further evidence is provided by our negative $2$-braid examples. We have seen that the $\Dq = \frac{c_-r^2}{2}+1$.
Therefore $\mathcal{S} = \{\frac{c_-}{2}\}$. For any alternating knot $2c_-\in \mathcal{B}$, see for example \cite{Ga11} so the HOMFLY slope conjecture holds.  

Comparing to the original slope conjecture it is likely that there are knots $K$ such that $\mathcal{S}(K)$ contains fractions with high denominators \cite{GV14}.
This gives an interesting constraint on any state sum for the colored HOMFLY polynomial: Either it involves a huge amount of cancellation or
it explicitly encodes such fractions.

\subsection{The head of the colored HOMFLY}

An attractive way to organize the colored HOMFLY polynomial is as a Laurent series 
\[P_r(K;a;q)=\sum_{j=d_r}^\infty c^r_j(a) q^{-j}\]
This goes one step beyond our investigation of the maximal $q$-degree.
For positive diagrams and $r= 1$ the leading term of such an expansion was called the top of the HOMFLY polynomial \cite{KM13}
(using the variable $z=q^{\frac{1}{2}}-q^{-\frac{1}{2}}$).

The above expansion is useful because the coefficients $c_j^r(a)$ seem to stabilize once scaled properly.
Similar stabilization phenomena were discussed for positive and alternating knots in the colored Jones case \cite{AD13,GL11}.
Although unproven this stabilization is likely to persist for general knots after one passes to properly chosen
arithmetic subsequences in $r$.

As a working definition let us define the head of the colored HOMFLY as a rescaled version of the top $r$ coefficients. 

\begin{Definition}
The \emph{head} of the colored HOMFLY polynomial is defined as 
\[a^{f_r}\sum_{j=0}^{r}c^r_{j+d_r}(a)q^{-j}\]
where $a^{f_r}$ is a monomial chosen such that the leading term of $a^{f_r}c_{d_r}(a)$ is $1$.
\end{Definition}

The head is expected to be independent of $r$ in the following sense. Let $H(a,q)\in \mathbb{Z}[a,a^{-1}][[q^{-1}]]$ be a power series such that
the highest $r$ coefficients in $q$ are equal to the head $a^{f_r}\sum_{j=0}^{r}c^r_{j+d_r}(a)q^{-j}$ as before. 
As mentioned this definition needs to be adjusted by passing to subsequences for general knots. For positive knots however it is fine.

\begin{Theorem}
For any positive braid closure, the head and equal to the head of the unknot, i.e. 
equal to the first $r$ coefficients of
\[\frac{(a^{-1};q)_{\infty}}{(q^{-1};q^{-1})_{\infty}}\]
\end{Theorem}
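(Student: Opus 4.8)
The plan is to combine the degree bounds from Theorem~\ref{Thm.Bounds} with the explicit state-sum machinery (the $q$-Ehrhart reformulation of the MOY calculus) to control not just the top $q$-degree but the top $r{+}1$ coefficients simultaneously. First I would fix a positive braid word $\beta$ on $n$ strands whose closure is $K$, so that $c_-=0$, $s_-=0$, and $c_+,s_+$ are determined by $\beta$. Since $K$ is a positive braid closure it is fibered with fiber the Seifert surface of slope $0$, and the Corollary already tells us $d_r=\dq P_r$ grows like $\frac{r}{2}(c_+-s_+)$, matching the lower bound in part (p.) against the upper bound specialized from Morton. The key structural input is that for a positive braid the MOY state sum has a \emph{unique} top-degree state, and I would isolate this state and show its contribution governs the leading $r+1$ coefficients.

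The central step is to prove that the head stabilizes to the unknot value by a local-to-global argument on the braid. I would first treat the unknot (the $1$-braid or empty braid) directly: its unreduced $r$-colored HOMFLY is the quantum dimension of the $r$-th symmetric power, and a standard $q$-binomial computation shows that after the normalization by $a^{f_r}$ its top $r+1$ coefficients agree with the first $r+1$ coefficients of $(a^{-1};q)_\infty/(q^{-1};q^{-1})_\infty$. Then, for a general positive braid, I would argue that adding a positive crossing or a strand multiplies the state sum by a factor whose effect on the top $r+1$ coefficients is exactly an overall monomial in $a$ and $q$ — i.e. the crossing contributes only to the degree-shift $d_r$ and $f_r$ and not to the normalized head. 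Concretely, each positive crossing in the $q$-Ehrhart picture contributes a single lattice-point generating function whose leading behaviour is a pure monomial up to order $q^{-r}$, so the sub-leading corrections that would distinguish $K$ from the unknot only appear below the top $r+1$ coefficients.

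The main obstacle I anticipate is controlling the interaction between crossings: while each crossing individually perturbs the head only below order $q^{-r}$, one must rule out that the \emph{product} of many such factors conspires to push a non-monomial correction up into the top $r+1$ coefficients. To handle this I would set up a valuation/filtration argument: assign to each admissible MOY state the $q$-degree defect relative to the maximal state, show this defect is additive over crossings and strictly positive for any non-maximal local choice, and then verify that every state other than the unique maximal one has defect at least $r+1$ in an appropriate counting. This is essentially a positivity statement about the $q$-Ehrhart polynomials attached to the state polytopes, and it is where the hypothesis of positivity of the braid is used in full force — for non-positive diagrams cancellations between states of comparable defect would invalidate the argument.

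Finally I would assemble these pieces: having reduced the head to the unique maximal state and shown that state's normalized contribution is diagram-independent among positive braids, I would evaluate it on the unknot to identify the limit as the claimed infinite product $(a^{-1};q)_\infty/(q^{-1};q^{-1})_\infty$. I expect the bookkeeping of the normalizing monomials $a^{f_r}$ and the framing/writhe corrections $(q^{r(r-1)/2}a^{r/2})^{-c}$ visible in \eqref{Eq.T2} to require care, but to be routine once the combinatorial core — that only the maximal state survives to the top $r+1$ coefficients — is established.
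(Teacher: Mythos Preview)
Your overall strategy --- isolate the states that feed the top of the $q$-expansion and show their normalized contribution is diagram-independent --- is the right shape, but the specific mechanism you propose has a genuine gap. The assertion that the MOY state sum for a positive braid has a \emph{unique} top-degree state is already false at the very top term: the lower-bound argument for part~(p) of Theorem~\ref{Thm.Bounds} shows that several states, all with $i_c=0$ and elementary flows parallel to the Seifert circles but with different orderings of non-adjacent flows, contribute to the top $q$-degree with signs $(-1)^{|\e|}$; their sum is $\pm1$ by inclusion--exclusion, not by uniqueness. More seriously, the filtration claim that ``every state other than the unique maximal one has defect at least $r+1$'' cannot hold: if it did, the head would come from a single Ehrhart polynomial, whereas the target series $(a^{-1};q)_\infty/(q^{-1};q^{-1})_\infty$ is visibly assembled from many states with defects $0,1,\dots,r$. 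The ``add one crossing, pick up only a monomial'' heuristic also does not survive, since the state sum is global and crossings do not act multiplicatively on it.

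The paper's route is different and sidesteps these issues. After a first reduction (only states whose elementary flows are all single-component can contribute to the top $r$ coefficients --- this is the honest analogue of your first step), it does \emph{not} try to pin down the contributing states individually. Instead it exhibits a weight-preserving bijection between the states contributing to the top $r$ terms of the braid $b=\prod_j\sigma_{k_j}$ and those of the \emph{sliced braid} $sb=\prod_j\sigma_{2k_j-1}$, which is a disjoint union of positive $2$-braids. The head of each $2$-braid is then read off from the $k=0$ term of~\eqref{Eq.T2}, giving $(a^{-1};q)_r$ independently of the number of crossings, hence equal to the unknot head. So rather than a defect/valuation argument, the device is a combinatorial bijection reducing to the explicitly known $2$-braid case; your inductive idea on crossings would need to be replaced by something of this kind.
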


The proof of this theorem shows the power of our new state sum in thinking about rather general situations in a 
diagrammatic fashion. The idea is to reduce to the case of $2$-braids. Recall that for $2$-braids we have an explicit formula
\eqref{Eq.T2}. For $c\geq 0$ the head is provided by the term $k=0$ only. It is then clear from the formula that the head is
independent of $c$ and given by $(a^{-1};q)_r$. Note that this family includes the unknot as $c =1$.

The same formula for $c<0$ shows that the notion of head is not vacuous. In this case the head comes from the $k=r$ term only
and can be expressed by the power series
\[\frac{(a^{-1};q)_\infty^2}{(q^{-1};q^{-1})_\infty(q^{-2};q^{-1})_\infty}\]

\subsection{Plan of the paper}

In the first section we develop a brief theory of $q$-Ehrhart polynomials that is perhaps interesting in its own right. 
The $q$-Ehrhart polynomials play an essential role in our reformulation of the MOY state sum for the colored HOMFLY polynomial.
The next two sections apply the state sum to derive our degree bounds and compute the head of the colored HOMFLY polymomial.
Finally we prove our state sum by showing how it relates to the original MOY approach. We end with a discussion.

\subsubsection{Acknowledgements}

The author thanks Tudor Dimofte, Hiroyuki Fuji, Stavros Garoufalidis, Paul Gunnells and Satoshi Nawata for stimulating conversations 
and Satoshi for his permission to use his unpublished formulas for torus links.
The author wishes to thank the organisers of the Oberwolfach workshop on low-dimensional topology and number theory for providing
an opportunity to present a of this work.
The author was supported by the Netherlands organization for scientific research (NWO).

\section{q-Ehrhart polynomials}
\label{sec.Ehrhart}

We present here a brief account of a $q$-analogue of Ehrhart polynomials and prove a reciprocity theorem for them. These polynomials provide
a flexible generalization of Gaussian $q$-binomial coefficients suitable for expressing HOMFLY polynomial of complicated knots.

Our treatment relies on the standard theory of generating functions for lattice point ennumeration and Stanley reciprocity, see for example \cite{BR07}.
The present development of $q$-Ehrhart polynomials is heavily inspired by the preprint by Chapoton \cite{Ch13}. The special case where
the polytope is generic and the linear form positive so that no polymomials in $N$ appear is due to Chapoton. 

For a polytope $Q\subset\mathbb{R}^m$ and a linear form $\lambda$ we consider the weighted sum 
\[W_\lambda(Q,q)=\sum_{x  \in \mathbb{Z}^m \cap N Q}q^{\lambda(x)}\]
We would like to study how $W$ changes as $Q$ gets scaled as $N Q$. For example when $Q = [0,1]$ then
\[W_\lambda(NQ,q) = \sum_{x = 0}^N q^{\lambda x} =\begin{cases} \frac{1-q^{N\lambda}}{1-q^\lambda} &\mbox{if } \lambda\neq 0 \\ N &\mbox{if } \lambda=0 \end{cases}\] 

\begin{Theorem} {\rm($q$-Ehrhart theorem):}\\
\label{q.Ehrhart}
Let $Q$ be a lattice polytope and a linear form $\lambda$.
\begin{enumerate} 
\item[a.] There exists a two variable Laurent polynomial $E_{\lambda,Q}(a,b,q)\in \mathbb{Q}(q)[a^{\pm 1},b]$ such that for every $N\in \mathbb{N}$:
\[
E_{\lambda,Q}(q^N,N,q) = W_\lambda(NQ,q)
\]
\item[b.] Denote by $(NQ)^o$ the interior lattice points of the polytope then we have the following form of {\bf Ehrhart reciprocity}: 
\[
E_{\lambda,Q}(q^N,-N,q^{-1}) = (-1)^{\mathrm{dim}(Q)}W_\lambda((NQ)^o,q)
\]
\end{enumerate}
\end{Theorem}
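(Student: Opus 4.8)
The plan is to lift the scaling problem to a single cone. I place $Q$ at height one, writing $\hat{Q}=Q\times\{1\}\subset\mathbb{R}^{m+1}$, and let $C=\mathrm{cone}(\hat{Q})$ be the $(m+1)$-dimensional pointed cone it spans. Since a point $(x,N)\in C$ with $N\in\mathbb{N}$ satisfies $x\in NQ$, the lattice points of $C$ at height $N$ are exactly $\mathbb{Z}^m\cap NQ$. I therefore study the single graded generating function
\[\hat{\Phi}(t,q)=\sum_{(x,n)\in C\cap\mathbb{Z}^{m+1}}q^{\lambda(x)}\,t^{n}=\sum_{N\geq 0}W_\lambda(NQ,q)\,t^N,\]
where $t$ records the height and $q$ records the (integral) linear form $\lambda$. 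Everything will follow from the rationality of $\hat{\Phi}$ together with Stanley reciprocity, as developed in \cite{BR07}.

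For part (a) I would first triangulate $Q$ into simplices using only its own vertices and pass to a half-open decomposition, so that the corresponding half-open simplicial cones partition $C$ and their generating functions simply add. For a simplicial piece with vertices $v_0,\dots,v_m$ the generating function is
\[\frac{\sum_{p\in\Pi}q^{\lambda(\bar p)}\,t^{\mathrm{ht}(p)}}{\prod_{i=0}^{m}\bigl(1-q^{\lambda(v_i)}t\bigr)},\]
the numerator ranging over the finitely many lattice points $p$ of the half-open fundamental parallelepiped $\Pi$ and $\bar p$ its projection to $\mathbb{R}^m$. The crucial feature is that every generator $\hat{v}_i$ sits at height one, so each denominator factor is linear in $t$ and the numerator has $t$-degree at most $m$, making the fraction proper. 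Extracting $[t^N]$ by partial fractions then yields, for \emph{every} $N\geq 0$, a finite sum $\sum_{\ell}P_\ell(N,q)\,q^{\ell N}$, where $\ell$ runs over the distinct integer values $\lambda(v_i)$ and $P_\ell\in\mathbb{Q}(q)[N]$ has degree one less than the multiplicity of $\ell$. Replacing $q^{\ell N}$ by $a^{\ell}$ and $N$ by $b$ produces the desired $E_{\lambda,Q}(a,b,q)\in\mathbb{Q}(q)[a^{\pm 1},b]$; the $b$-dependence appears exactly when $\lambda$ fails to separate the vertices, which recovers Chapoton's generic, polynomial-free case.

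For part (b) I would apply Stanley reciprocity to the pointed cone $C$ in its graded form, obtaining the identity of rational functions $\hat{\Phi}(t^{-1},q^{-1})=(-1)^{m+1}\,\hat{\Phi}_{\mathrm{int}}(t,q)$, where $\hat{\Phi}_{\mathrm{int}}$ sums $q^{\lambda(x)}t^{n}$ over the interior lattice points of $C$. A point $(x,N)$ with $N\geq 1$ lies in $C^{\circ}$ iff $x\in(NQ)^{\circ}$, so the height-$N$ coefficient of $\hat{\Phi}_{\mathrm{int}}$ is precisely $W_\lambda((NQ)^{\circ},q)$. It remains to convert this into an evaluation of $E$. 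Stanley's inversion acts on both grading variables: replacing $q$ by $q^{-1}$ turns the denominators into $1-q^{-\ell}t$, so the relevant base is $\beta_\ell=q^{-\ell}$; then the elementary reciprocity $-F(t^{-1})=\sum_{N\geq 1}p_{-N}\,t^N$ for such rational functions continues $p_N=\sum_\ell P_\ell(N,q^{-1})\beta_\ell^{N}$ to $p_{-N}=\sum_\ell P_\ell(-N,q^{-1})\beta_\ell^{-N}$. The two inversions conspire: the polynomial factor sees $N\mapsto -N$, the coefficient ring sees $q\mapsto q^{-1}$, while $\beta_\ell^{-N}=q^{\ell N}=a^{\ell}$ with $a$ \emph{un-inverted} — which is exactly the content of evaluating at $(q^{N},-N,q^{-1})$. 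Tracking the global minus sign against Stanley's $(-1)^{m+1}$ gives $(-1)^{m+2}=(-1)^{m}=(-1)^{\dim Q}$, the stated sign.

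The main obstacle, and the step deserving the most care, is this last translation: keeping the three substitutions $a\mapsto q^{N}$, $b\mapsto -N$, $q\mapsto q^{-1}$ straight and verifying that the exponential slot $a$ is not inverted even though the coefficient variable $q$ is. The three-variable formulation of $E$ is precisely what makes this bookkeeping possible, since reciprocity acts differently on the exponential part (captured by $a$), the polynomial part (captured by $b$), and the coefficients (captured by $q$). I would pin down the signs and the half-open boundary convention by checking the identity explicitly on $Q=[0,1]$, where $E_{\lambda,Q}(a,b,q)=(1-q^{\lambda}a^{\lambda})/(1-q^{\lambda})$ and both sides of the reciprocity sum in closed form. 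A secondary, purely technical point is to confirm that the half-open decomposition counts every lattice point of $NQ$, and of $(NQ)^{\circ}$, with multiplicity exactly one, so that the simplexwise generating functions genuinely add.
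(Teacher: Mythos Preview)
Your proposal is correct and follows essentially the same route as the paper: reduce to simplices, write the graded generating series $\sum_N W_\lambda(NQ,q)z^N$ as a rational function with denominator $\prod_v(1-q^{\lambda(v)}z)$, extract the $q$-Ehrhart polynomial by partial fractions, and deduce reciprocity from Stanley's theorem applied to that same rational function. The paper uses inclusion--exclusion where you use a half-open triangulation, and it writes the generating function directly rather than via the cone over $\hat Q$, but these are cosmetic differences. Your tracking of the three substitutions $a\mapsto q^N$, $b\mapsto -N$, $q\mapsto q^{-1}$ and of the sign $(-1)^{m+1}\cdot(-1)=(-1)^{\dim Q}$ is in fact more explicit than the paper's own argument.
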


\begin{proof}
By triangulating and the inclusion-exclusion principle we may reduce to the special case where $Q$ is a (lattice) simplex \cite{BR07}. For such a simplex $Q$ we consider the generating series 
$G_{\lambda,Q}(q,z) = \sum_{N=0}^\infty W_{\lambda}(Q,q)z^N$. By Theorem 3.5 on the integer point transform \cite{BR07} we know that $G(q,z)$ is a rational function in $q,z$ whose denominator can be factored as $\prod_{w\in V(Q)} (1-q^{\lambda(w)} z)$. Here the product runs over the set of vertices $V(Q)$ of the simplex $Q$.
If the denominator had only one such factor then we could expand $(1-q^{r} z)^{-s} = \sum_N (N+s)\hdots (N+1)q^Nz^N$ to obtain part 1) of the theorem. We can reduce to this case by expanding $G_{\lambda,Q}(q,z)$ into partial fractions with respect to $z$ with coefficients in $\mathbb{Q}(q)$.

For the reciprocity statement of part 2) we apply Stanley's reciprocity theorem, Theorem 4.3 in \cite{BR07}, to the generating function
\[\sum_{N=0}^\infty W_{\lambda,Q^o}(q)z^N=G_{\lambda,Q^o}(q,z)\] 
to obtain
\[(-1)^{1+\mathrm{dim}(Q)}G_{\lambda,Q^o}(q,z) = G_{\lambda,Q}(q^{-1},z^{-1})\] 
\[= \sum_{N=0}^\infty E_{\lambda,Q}(q^{-N},N,q^{-1})z^{-N}= \sum_{N=-1}^{-\infty} E_{\lambda,Q}(q^{N},-N,q^{-1})z^{N} \]
\[=\sum_{N=0}^{\infty} E_{\lambda,Q}(q^{N},-N,q^{-1})z^{N}\]
In the last equality we used the fact that the left hand side is a rational function in $(q,z)$ of the special form $\sum_{i,j}c_{ij} (1-q^{r_i} z)^{-s_j}$ with $c_{ij}\in \mathbb{Q}(q)[z]$.

\end{proof}

The relevant examples for our state sums are the order polytopes \cite{St72}. Given a partial ordered set $(X,\leq)$ we consider the \emph{order polytope} $P_X \subset \mathbb{R}^{X}$ given by 

\[
P_X = \{v\in \mathbb{R}^X | x\leq x' \Rightarrow v_x\leq v_x'\} \cap [0,1]^X
\]

The order polytope is a lattice polytope with vertices in the unit cube $[0,1]^X$.
The order polytope of a linear ordering is a simplex. The set of all linear orderings extending the partial order on the set $X$ can thus be interpreted as a triangulation of $P_X$.

For example consider the order $X=\{1,2\}$ then $P_X$ is the triangle $v_1\leq v_{2} \in [0,1]^2$. We choose the linear form $\lambda(v_1,v_2)=v_1-v_2$. The generating function for the Ehrhart polynomials is 
\[G_{\lambda,Q}(q,z) = \frac{1}{(1-z)^2(1-q^{-1} z)}=\]
\[\frac{q}{(-1 + q) (-1 + z)^2} - \frac{q}{(-1 + q)^2 (1 - z)} + 
\frac{1}{(-1 + q)^2 (1 - q^{-1} z)}\]
Hence the Ehrhart polynomial is $E_{\lambda, P_X}(a,b,q) = $

\[
\frac{q(b+1)}{(-1 + q)} - \frac{q}{(-1 + q)^2} + 
\frac{a^{-1}}{(-1 + q)^2} = \frac{a^{-1} - 2 q - b q + q^2 + b q^2}{(1-q)^2}
\]
And indeed summing the powers of $q$ in $3P_X$ gives $E_{\lambda, P_X}(q^3,3,q) = 4+3q^{-1}+2q^{-2}+q^{-3}$ as expected.
According to the reciprocity theorem the interior points are obtained by $E_{\lambda, P_X}(q^3,q,-3,q^{-1}) = q^{-1}$ again as expected. 

\begin{Lemma}
For any partial ordered set $X$ positive linear form $\lambda$ on $\mathbb{R}^X$, the $q$-Ehrhart polynomial $E_{\lambda,P_X}(a,b,q)$ does not depend on the variable $b$.  
\end{Lemma}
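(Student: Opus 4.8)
The plan is to show that the generating function $G_{\lambda,P_X}(q,z)=\sum_{N\ge 0}W_\lambda(NP_X,q)\,z^N$ has only \emph{simple} poles in $z$; by the mechanism of the proof of Theorem \ref{q.Ehrhart} this forces $E_{\lambda,P_X}$ to be independent of $b$. Recall that $E$ is assembled from the partial-fraction expansion of $G$ in $z$: a factor $(1-q^{r}z)^{-s}$ contributes $\binom{N+s-1}{s-1}q^{rN}$ to the coefficient of $z^N$, and it is precisely the polynomial $\binom{N+s-1}{s-1}$ (under $N\mapsto b$) that produces the $b$-dependence. Thus $b$ can only enter through poles of order $s\ge 2$, and simple poles everywhere yield a $b$-free Ehrhart polynomial $E_{\lambda,P_X}(a,b,q)=\sum_r c_r(q)\,a^{r}$.

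To control the pole orders I would \emph{not} use the crude denominator $\prod_{w\in V(P_X)}(1-q^{\lambda(w)}z)$ coming from the integer point transform, since distinct filters can share the same value $\lambda(w)$ and the naive multiplicities are large. Instead I would invoke Stanley's theory of $P$-partitions \cite{St72,BR07}: the order polytope has a canonical triangulation whose maximal simplices are indexed by the linear extensions $\pi$ of $X$, and the associated half-open simplices partition the lattice points of $NP_X$ for every $N$. Writing $n=|X|$ and summing $q^{\lambda(\cdot)}$ over a chain $0\le v_{\pi(1)}\le^{*}\cdots\le^{*}v_{\pi(n)}\le N$, then telescoping via the gap variables $d_i$, yields for each $\pi$ a generating function of the shape $G_\pi(q,z)=q^{e_\pi}\prod_{i=0}^{n}(1-q^{\Lambda^\pi_i}z)^{-1}$ (with an extra numerator factor $q^{\Lambda^\pi_i}z$ at each descent of $\pi$), where $\Lambda^\pi_i=\sum_{j>i}\lambda_{\pi(j)}$ is the value of $\lambda$ at the $i$-th vertex of the chain. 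Since the partition is exact for every $N$, these sum to $G=\sum_\pi G_\pi$ as rational functions.

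The key point — and the only place positivity of $\lambda$ is used — is that within a single simplex the poles are simple. Because every coefficient of $\lambda$ is strictly positive, the partial sums are strictly decreasing, $\Lambda^\pi_0>\Lambda^\pi_1>\cdots>\Lambda^\pi_{n-1}>\Lambda^\pi_n=0$, so the factors $1-q^{\Lambda^\pi_i}z$ in $G_\pi$ are pairwise distinct and each numerator correction $q^{\Lambda^\pi_i}z$ is nonzero at its pole. Hence every $G_\pi$ has only simple poles in $z$. A sum of functions with at most simple poles again has at most simple poles (residues may cancel but orders cannot increase), so $G$ itself has only simple poles even though several simplices may contribute at the same location $z=q^{-r}$. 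Feeding this back into the first paragraph gives the claim.

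The main obstacle is exactly the discrepancy flagged above: the denominator furnished by the abstract $q$-Ehrhart theorem genuinely has high multiplicity, so one cannot read simplicity of the poles off it. The real work is to replace that denominator by the finer $P$-partition decomposition and to verify that positivity upgrades ``monotone along each maximal chain of filters'' to ``strictly monotone, hence distinct poles''. The bookkeeping of strict versus weak inequalities at the descents of $\pi$ is routine and, as noted, affects only numerators; one only has to check it never cancels a pole. A useful sanity check is the computation $\lambda(v_1,v_2)=v_1-v_2$ in the excerpt, which is \emph{not} positive: there $\Lambda_0=\Lambda_2=0$ produces a double pole at $z=1$, and correspondingly $E$ does depend on $b$, confirming that positivity is the essential hypothesis.
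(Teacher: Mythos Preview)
Your argument is correct and follows essentially the same route as the paper: triangulate $P_X$ by linear extensions, use positivity of $\lambda$ to see that its values on the vertices of each chain simplex are strictly monotone (hence distinct), so each piece of the generating function has only simple poles in $z$, and conclude via partial fractions that no $b$-term can appear. Your version is actually more careful than the paper's, which just says ``reduce to the linearly ordered case'' without spelling out the half-open $P$-partition decomposition or the remark that a finite sum of rational functions with only simple poles again has only simple poles.
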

\begin{proof}
Using the above triangulation we may reduce to the case where $X$ is a linearly ordered set, so that $P_X$ is a simplex. The coordinates of each vertex $v$ of the simplex are in bijection with the elements of $x$ as follows. For $x\in X$ define $v^x$ as $v^x_{x'} = 0$ if $x'\leq x$ and $v^x_{x'}=1$ otherwise. It is then clear that $\lambda$ takes different values on each of the vertices. This means that the generating series from the proof of the theorem 
$G_{\lambda,P_X}(q,z) = \sum_{N=0}^\infty W_{\lambda}(P_X,q)z^N$ 
has a denominator that is a product of distinct factors $\prod_{x\in X}^p(1-q^{\lambda{v_x}} z)$
Expanding in partial fractions now shows that the only $N$ dependence of the coefficients of the series $G$ is as $a=q^N$.
\end{proof}

As a preparation for our degree bounds for the HOMFLY polyomial we derive some elementary bounds on the degree in
$a$ and $q$ for the $q$-Ehrhart polynomials.

\begin{Lemma}
\label{Lem.EhrhartBounds}
For any polytope $Q$ with no interior vertices and linear form $\la$ we have
\begin{enumerate}
\item[a.] $\Da E_{Q,\la}(aq^{-1},-1,q) \leq \max_{x\in Q} \la(x)$
\item[q.] $\Dq E_{Q,\la}(aq^{-1},-1,q) \leq -\max_{x\in Q} \la(x)$
\end{enumerate}

\end{Lemma}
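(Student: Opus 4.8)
The plan is to pass to the partial-fraction description of $E_{\la,Q}$ that underlies the $q$-Ehrhart theorem and to exploit the special value $b=-1$. Writing the generating series as $G_{\la,Q}(q,z)=\sum_{\ell\in L}\sum_{s\ge 1}C_{\ell,s}(q)(1-q^{\ell}z)^{-s}$, where $L=\{\la(w):w\in V(Q)\}$ and the poles $z=q^{-\ell}$ arise from the vertices, extraction of the $z^N$ coefficient gives $E_{\la,Q}(a,b,q)=\sum_{\ell,s}C_{\ell,s}(q)\,p_s(b)\,a^{\ell}$ with $p_s(b)=\binom{b+s-1}{s-1}$, since $q^{\ell N}=a^{\ell}$ under $a=q^N$. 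The key observation is that $p_1(-1)=1$ while $p_s(-1)=0$ for all $s\ge 2$, so that at $b=-1$ only the simple-pole residues survive: $E_{\la,Q}(aq^{-1},-1,q)=\sum_{\ell\in L}C_{\ell,1}(q)\,q^{-\ell}a^{\ell}$. Because every exponent $\ell$ equals $\la(w)$ for a vertex $w$ and $\max_{x\in Q}\la(x)=\max_{w\in V(Q)}\la(w)=:M$, part a.\ is immediate: every $a$-power is at most $M$, hence $\Da E_{\la,Q}(aq^{-1},-1,q)\le M$. Part q.\ then reduces to the single estimate $\Dq C_{\ell,1}(q)\le \ell-M$ for each $\ell\in L$, since each term then satisfies $\Dq[C_{\ell,1}(q)q^{-\ell}]\le -M$.

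To prove this residue estimate uniformly across frequencies $\ell$, I would not triangulate arbitrarily: a simplex missing the maximizing vertex has $\max_\sigma\la<M$ and only yields the weaker bound $-\max_\sigma\la>-M$. Instead I would fix a vertex $v_{\max}$ with $\la(v_{\max})=M$ and use a pulling (half-open) triangulation based at $v_{\max}$, writing $NQ\cap\mathbb Z^m$ as a disjoint union of half-open lattice simplices each having $v_{\max}$ as a vertex. Then $E_{\la,Q}=\sum_\sigma E_{\la,\sigma}$ with no inclusion--exclusion signs, and by subadditivity of $\Dq$ it suffices to bound each $E_{\la,\sigma}(aq^{-1},-1,q)$. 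The point of basing at $v_{\max}$ is that every simplex $\sigma$ now carries the denominator factor $(1-q^{M}z)$; hence at the pole $z=q^{-\ell}$ the product of the remaining factors contributes $q$-degree at least $M-\ell$, the exact surplus we need.

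For a fixed half-open simplex the generating function is $G_{\la,\sigma}(q,z)=h_\sigma(q,z)\big/\prod_{i}(1-q^{\la(v_i)}z)$, where $h_\sigma(q,z)=\sum_{p}q^{\la(\bar p)}z^{\nu(p)}$ runs over the lattice points of the half-open fundamental parallelepiped, $\nu(p)$ being the height. Each such $p$ satisfies $\la(\bar p)=\sum_i t_i\la(v_i)\le M\sum_i t_i=M\nu(p)$ with $0\le t_i<1$, and the strict inequality $t_{\max}<1$ in the $v_{\max}$-direction provides a genuine slack of size $M-\ell$. Combining the numerator bound on $\Dq h_\sigma(q,q^{-\ell})$ with the denominator degree $\ge M-\ell$ yields $\Dq C_{\ell,1}(q)\le \ell-M$, hence $\Dq E_{\la,\sigma}(aq^{-1},-1,q)\le -M$, and summing over $\sigma$ gives part q.

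The hard part will be the exact degree bookkeeping for the residue $C_{\ell,1}(q)$ when a simplex has several vertices sharing the same value $\ell$, so that $z=q^{-\ell}$ is a pole of order $m_\ell>1$ and $C_{\ell,1}$ is a higher $z$-derivative at the pole; each differentiation releases a factor $q^{\ell}$ that threatens to consume the slack, and one must check it never does. Verifying the half-open numerator bound and confirming that the pulling triangulation can be carried out using only vertices already present --- which is where the hypothesis that $Q$ has no interior vertices enters --- are the remaining points that require care.
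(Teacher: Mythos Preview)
Your approach is correct and differs from the paper's in two respects. The paper reduces by inclusion-exclusion to a simplex and then asserts (as an exercise for the reader) the bound $\Dq C_{ij}(q)\le e_j-e_0$ for \emph{all} partial-fraction coefficients; you instead use the special value $b=-1$ to kill every term with $s\ge 2$, so that only the simple-pole residues $C_{\ell,1}$ matter---a nice shortcut, though not strictly necessary since the paper's stronger claim on all $C_{ij}$ can be checked directly by the same residue calculus. More substantively, you observe that a generic triangulation contains simplices $\sigma$ with $M_\sigma=\max_{x\in\sigma}\la(x)<M$, for which the per-piece bound $\Dq\le -M_\sigma$ is \emph{weaker} than $-M$ and therefore does not assemble into the desired global estimate; the paper's one-line reduction elides exactly this point, and your pulling half-open triangulation based at a maximizing vertex $v_{\max}$ (so that every simplex carries the factor $(1-q^{M}z)$ and has $e_0=M$) is precisely the repair needed. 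In the paper's actual application $Q$ is an order polytope, hence compressed, so any pulling triangulation is unimodular and the numerators $h_\sigma$ are monomials $q^{\alpha}z^{\beta}$; with this in hand the residue bookkeeping you flag as ``the hard part'' (higher-order poles, numerator estimates) reduces to the same routine degree count as in the paper's exercise, and your argument closes.
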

\begin{proof}
As usual by inclusion-exclusion we may reduce to the case where $Q$ is a simplex. Recall that in this case
the Ehrhart series is given by 
\[
G_{\lambda,Q} = \prod_{w\in V(Q)} (1-q^{\lambda(w)} z)^{-1} 
\]
After expanding in partial fractions the highest degree term in $a$ will come from the
the maximum of $\la$ over the vertices of $Q$, proving part $a$.

The proof of part $q$ is similar but we need to work a little more since the coefficients of the partial fraction expansion are 
rational functions in $q$. Collect like terms in $G$ to write
\[G_{\la,Q} = \prod_{j=0}^{k}(1-q^{e_j}z)^{-m_j}\]
where $m_j\in \mathbb{N}$ and we may assume that $e_k>e_j$ for $k<j$. Expanding in partial fractions yields
\[G_{\la,Q} = \sum_{j=0}^k\sum_{i=1}^{m_j} C_{ij}(q)(1-q^{e_j})^{-i}\]
The reader is invited to check that we have $\Dq C_{ij}(q) \leq e_j-e_0$. Finally the term $C_{ij}(1-q^{e_j})^{-i}$ contributes the power $a^{e_j}$ to $E_{\la,Q}(q,a,b)$
so that the replacement $a\mapsto aq^{-1}$ decreases the $q$-degree by $e_j$ leaving us with $-e_0 = -\max_{x\in Q} \la(x)$.
\end{proof}

\section{The symmetric MOY state sum}
\label{Sec.Sym}

In this section we describe our new state sum for any link diagram $D$ in terms of MOY graphs.
A MOY graph is a pair $(\Ga,\ga)$, where $\Ga$ is an oriented trivalent graph embedded in the plane without sources or sinks together with a flow $\ga$.
A flow is a function $\ga:E(\Ga)\to \mathbb{Z}_{\leq 0}$ such that at every vertex the sum of the values of the incoming edges equals the sum of the outgoing edges. 
We want to define the symmetric state sum evaluation $[\Ga,\ga](a,q)$ of any MOY graph. For this we need a couple of definitions. 

\begin{Definition}
\begin{enumerate}
\item An elementary flow $\e$ is a non-zero flow that takes values in $\{0,1\}$. The set of such flows is denoted $\mathcal{E}$. 
\item The orientation of $\Ga$ induces a rotation number on each component of $\e^{-1}\{1\}$ (i.e. $+1$ for counter-clockwise $-1$ for clockwise). 
The rotation number $\rot(\e)$ is the sum of the rotation numbers of the components of $\e$
\item The intersection number of a pair of elementary flows is defined by the formula $\lag\delta,\e\rag=$
\[
\frac{1}{4}|\{v\in V(\Ga)| \delta(v_L)=1, \e(v_R)=1\}|-\frac{1}{4}|\{v\in V(\Ga)| \e(v_R)=1, \delta(v_L)=1\}|
\]
where $v_L$ is the edge at vertex $v$ that goes left respect to the orientation and $v_R$ goes right at $v$.
\end{enumerate}
\end{Definition}

We are now ready to define the symmetric state sum for a MOY graph. This may be regarded as a generalization of the HOMFLY polynomial to
trivalent planar graphs. Including crossings in the usual way would give a generalization to any trivalent graph in the three sphere.

\begin{Definition}
The symmetric evaluation $[\Ga,\ga](a,q)$ of a MOY graph $(\Ga,\ga)$ is defined as
\[
[\Ga,\ga](a,q) = \sum_{\e: \sum_j \e_j = \gamma} (-1)^{|\e|}\left(\frac{q}{a}\right)^{\frac{1}{2}\sum_{j}\rot(\e_j)}
q^{-w(\e)} E_{\rot,P_{\e}}(\frac{a}{q},-1,q)
\]
Here the sum is over sequences $\e=(\e_1,\e_2\hdots)$ of elementary flows and $P_\e$ is the order polytope
of $\e$ interpreted as a linearly ordered set $\e_1\leq\e_2\leq \hdots$. The length of a sequence $\e$ is denoted $|\e|$ 
and finally the weight is $w(\e) = \sum_{i<j}\lag \e_i,\e_j \rag$.
\end{Definition}

\begin{figure}[htp]
\label{fig.Di}
\begin{center}
\includegraphics[width=7cm]{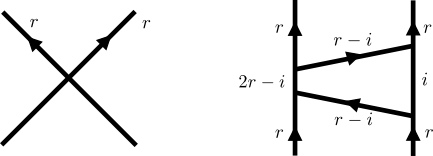}
\end{center}
\caption{Replacing a crossing (left) by a piece of a MOY graph (right).}
\end{figure}

The main result of this section is the following expression for the colored HOMFLY of any link.

\begin{Theorem}
\label{Thm.SymMOY}
Let $L$ be a link with oriented link diagram $D$ and $C$ the set of its crossings, $c_+$ positive and $c_-$ negative ones.
Expand the diagram as a linear combination of MOY graphs $D_i$ by replacing all the crossings as shown in Figure \ref{fig.Di}, then
$P_{r}(D;a,q) =$ 
\[(-a^{-\frac{r}{2}} q^{-\frac{r(r-1)}{2}})^{c_+-c_-} 
\sum_{i\in \{0,1,\hdots r\}^{C}}\left(\prod_{c\in C}(-1)^{i_c} q^{-\sigma(c)\frac{i_c}{2}}\right)[D_i](a,q)\]
where the sum runs over $i = (i_c)_{c\in C}$ and $\sigma(c)$ is the sign of the crossing $c$.
\end{Theorem}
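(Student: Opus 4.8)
The plan is to prove Theorem \ref{Thm.SymMOY} by establishing that the symmetric evaluation $[\Ga,\ga](a,q)$ agrees with the original MOY state sum of \cite{MOY98}, and then invoking the standard MOY expansion of a crossing to reassemble the colored HOMFLY polynomial. The theorem has two logically separate ingredients: first, the crossing-resolution formula that writes $P_r(D;a,q)$ as a signed sum over the graphs $D_i$ with the prefactor $(-a^{-r/2}q^{-r(r-1)/2})^{c_+-c_-}$ and local weights $(-1)^{i_c}q^{-\sigma(c)i_c/2}$; and second, the identification of my novel evaluation $[\Ga,\ga]$ with the MOY graph polynomial. I would treat these in that order, since the first is essentially a bookkeeping translation of the known MOY skein relation for symmetric powers, whereas the second is where the genuinely new content of the paper lives.

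First I would recall the MOY calculus: each crossing of a link colored by the $r$-th symmetric power expands, via the Reshetikhin--Turaev $R$-matrix, into a linear combination of the trivalent "thick-edge" MOY graphs obtained by summing over an internal label $i_c \in \{0,1,\dots,r\}$ at each crossing (the two strands of colors $r,r$ fuse through an edge of color $r-i_c$ or $i_c$, as in Figure \ref{fig.Di}). The coefficients of this expansion, including the framing-correction monomials and the sign $(-1)^{i_c}$, are standard; collecting the framing contributions over all $c_+$ positive and $c_-$ negative crossings produces exactly the global prefactor $(-a^{-r/2}q^{-r(r-1)/2})^{c_+-c_-}$. This step is routine once the normalization conventions relating $a$, $q$, and the quantum parameters are pinned down; the only care needed is to match the unreduced normalization of $P_r$ and to verify that the sign $\sigma(c)$ enters the exponent $q^{-\sigma(c)i_c/2}$ with the correct orientation convention.

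The substantive step is to prove that for each fixed MOY graph $(\Ga,\ga)$ the symmetric evaluation $[\Ga,\ga](a,q)$ equals the MOY evaluation. Here I would use the defining sum over sequences of elementary flows $\e=(\e_1,\e_2,\dots)$ with $\sum_j \e_j = \ga$: each such sequence records a way of routing the flow $\ga$ as an ordered superposition of $\{0,1\}$-valued flows, which is precisely the combinatorial data the MOY formula sums over when it decomposes a thick edge into unit strands. The rotation-number factor $(q/a)^{\frac12\sum_j\rot(\e_j)}$ supplies the curvature/writhe contribution of each unit strand, the intersection weight $w(\e)=\sum_{i<j}\lag\e_i,\e_j\rag$ supplies the crossing interactions between strands, and the $q$-Ehrhart polynomial $E_{\rot,P_\e}(a/q,-1,q)$ supplies the quantum-dimension factors attached to each thick edge. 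The key identity to verify is that $E_{\rot,P_\e}(a/q,-1,q)$, evaluated on the order polytope of the chain $\e_1\le\cdots\le\e_{|\e|}$, reproduces the Gaussian $q$-binomial weights of MOY; by Lemma 2 the $b$-dependence drops out for positive linear forms, and the $q$-Ehrhart Theorem \ref{q.Ehrhart} lets me read these weights off as specializations $a=q^N$ of a single polynomial, which is what makes the colors unify into the two-variable $a$.

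\textbf{The main obstacle} I anticipate is bookkeeping the signs, rotation numbers, and intersection weights coherently enough that the per-strand contributions of my flow-based sum match the per-edge contributions of MOY term by term. In particular, proving that $w(\e)=\sum_{i<j}\lag\e_i,\e_j\rag$ is the correct exponent requires checking that the asymmetric intersection pairing $\lag\delta,\e\rag$ counts exactly the signed crossings between the strand supported on $\delta^{-1}\{1\}$ and the strand on $\e^{-1}\{1\}$, including the subtle left/right edge conventions at each vertex, and that this is independent of the chosen linear order on the $\e_j$ (up to the compensating behavior of the Ehrhart factor). I would isolate this as a local lemma proved vertex-by-vertex and edge-by-edge on the planar graph, reducing the global identity to a finite check on the elementary MOY relations (edge migration, the square and digon removal moves); once those local evaluations are confirmed to agree with \cite{MOY98}, invariance and the global formula follow by the uniqueness of the MOY evaluation.
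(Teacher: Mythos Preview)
Your plan is coherent but takes a genuinely different route from the paper's own proof, and it skirts the idea that actually makes the argument work.

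The paper does \emph{not} verify that $[\Ga,\ga]$ satisfies the local MOY relations (digon/square removal, edge migration) and then appeal to a uniqueness statement. Instead it works globally and indirectly. First, it takes the \emph{anti-symmetric} MOY theorem for $P_{r^t}(D;q^N,q)$ from \cite{MOY98} as a black box. It then reinterprets each MOY state $\sigma:E(\Ga)\to 2^{A_N}$ as a function on the set $\mathcal{E}$ of elementary flows, and observes that the vertex weights $\prod_v \mathrm{wt}(\sigma,v)$ depend only on the \emph{sequence} $\e=(\e_1,\e_2,\dots)$ of elementary flows obtained by reading off the level sets of $\sigma$ in order; this is exactly $q^{w(\e)}$. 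The heart of the proof is then Lemma~\ref{Lem.Collect}: summing $q^{\rot\sigma}$ over all MOY states inducing a fixed $\e$ is a sum over \emph{strictly} increasing tuples in $A_N^{|\e|}$, which after a shift is the sum over the \emph{interior} lattice points of the scaled order polytope $(N+1)P_\e$. Ehrhart \emph{reciprocity} (Theorem~\ref{q.Ehrhart}b) converts this into the value $(-1)^{|\e|}E_{\rot,P_\e}(aq,-b-1,q^{-1})$ at $a=q^N$, $b=N$. This yields a two-variable anti-symmetric evaluation $\lag\Ga,\ga\rag(q,a,b)$ and a state sum for $P_{r^t}(D;a,q)$. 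Only at the very end does the paper pass to the symmetric polynomial via the transpose symmetry $P_r(L;a,q)=(-1)^rP_{r^t}(L;a,q^{-1})$, which is what turns $q^{w(\e)}$ into $q^{-w(\e)}$, $q^{\sigma(c)i_c/2}$ into $q^{-\sigma(c)i_c/2}$, and produces the evaluation $E_{\rot,P_\e}(a/q,-1,q)$ appearing in $[\Ga,\ga]$.

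By contrast, your proposal treats the Ehrhart factor as a local ``quantum-dimension'' weight on thick edges and hopes to match MOY relation by relation. That misidentifies the role of $E_{\rot,P_\e}$: it is not attached to individual edges but encodes the entire sum over label choices in $A_N$ compatible with the flow sequence, and its appearance is forced by reciprocity applied to interior lattice points, not by Theorem~\ref{q.Ehrhart}a alone. Your route might be pushed through, but you would have to prove that the flow-sequence sum with Ehrhart weights is stable under each MOY move, a nontrivial combinatorial task the paper entirely avoids. You also miss the anti-symmetric detour and the final $q\mapsto q^{-1}$ symmetry, which in the paper are what fix all the signs and exponents you flag as your ``main obstacle.''
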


The proof of this theorem can be found in section \ref{Sec.Reform}. The idea behind the state sum is to reformulate the original MOY state sum \cite{MOY98}

\subsection{Example: Hopf link}
As a simple illustration we evaluate our state sum on the positive Hopf link $H$ colored by $1$ so we compute $P_1(H;a,q)$.

The set of crossings is $C=\{1,2\}$ and $c_+=2$, $c_-=0$.
Expanding the two crossings as in Figure \ref{fig.Di} we get four terms. 

\[P_{1}(H;a,q) = a^{-1}\sum_{i\in \{0,1\}^2}\left(\prod_{c=1}^2(-1)^{i_c} q^{-\frac{i_c}{2}}\right)[D_i](a,q)\]

\begin{figure}[htp]
\label{fig.Hopf}
\begin{center}
\includegraphics[width=9cm]{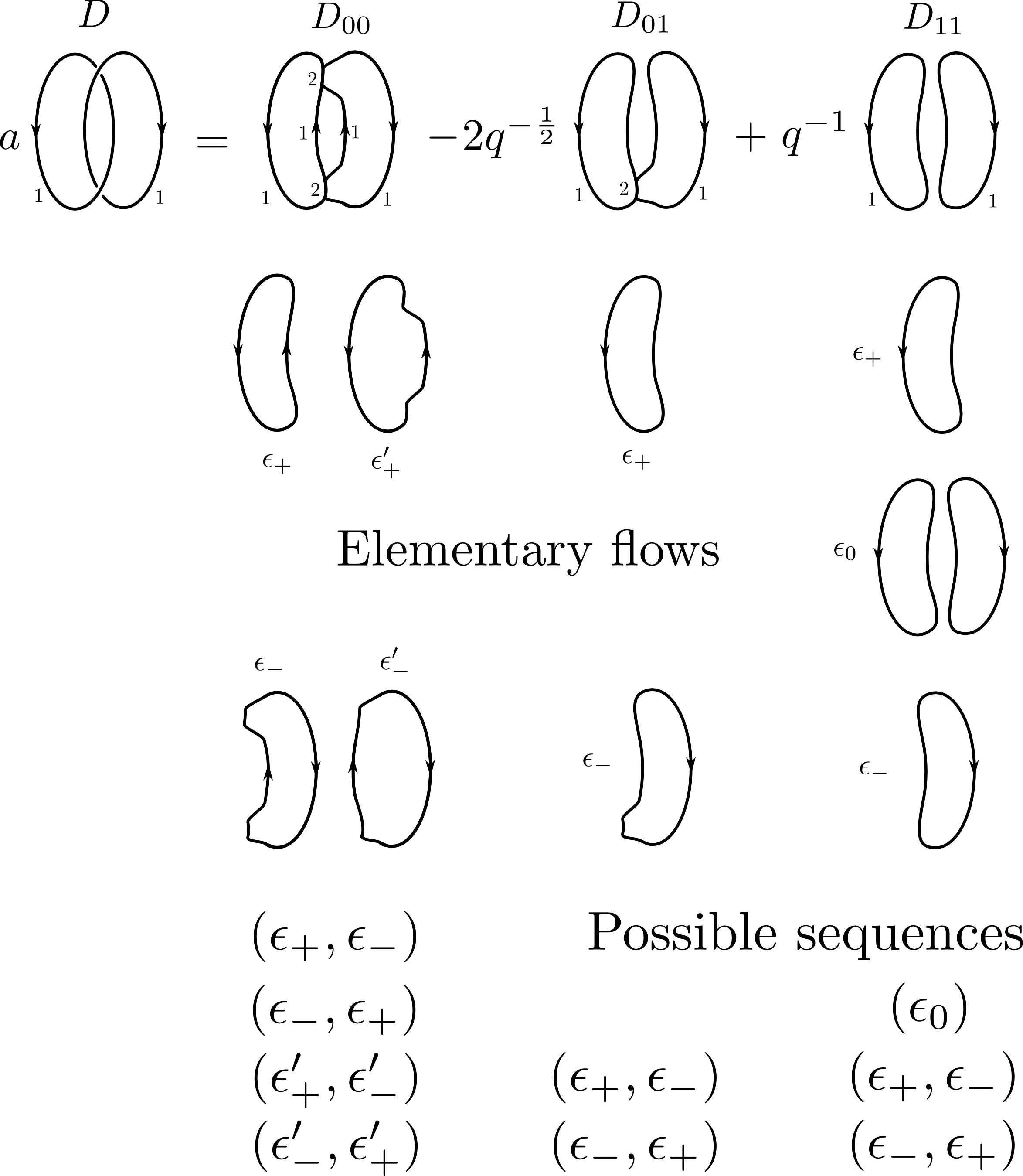}
\end{center}
\caption{First row: Expanding a diagram for the Hopf link into MOY graphs.
Under each MOY graph on the top row all the elementary flows necessary to calculate it
have been drawn. In the last row the possible combinations of elementary flows are given.}
\end{figure}

Since $D_{01}$ and $D_{10}$ we have only drawn one of them in Figure \ref{fig.Hopf}, with coefficient 2.
Below each MOY graph the same figure shows the possible elementary flows and also all possible
sequences of these flows that add up to the coloring. The subscript of each elementary flow is its
rotation number. For example $D_{11}$ allows three different elementary flows: $\e_+,\e_-$ and the two component flow $\e_0$.
The subscript indicates the rotation number of each flow. As shown there are only three possible sequences of elementary flows 
that add up to the flow of $D_11$, namely $(\e_+,\e_-), (\e_+,\e_-)$ and $(\e_0)$. These will be the three terms in
the sum for $[D_{11}]$. Since the weight and the sum of rotation numbers are $0$ we have $[D_{11}](a,q) = $
\[ 
\left(E_{\rot, P_{(\e_+,\e_-)}}+E_{\rot, P_{(\e_-,\e_+)}} -E_{\rot,P_{(\e_0)}}\right)(\frac{a}{q},-1,q) = \left(\frac{a^{\frac{1}{2}}-a^{-\frac{1}{2}}}{q^{\frac{1}{2}}-q^{-\frac{1}{2}}}\right)^2
\]
Here $E_{\rot,P_{(\e_0)}}(a,b,q) = b+1$ and $E_{\rot, P_{(\e_+,\e_-)}}(a,b,q) = E_{\rot, P_{(\e_+',\e_-')}}(a,b,q) =E_{\rot, P_{(\e_-,\e_+)}}(a^{-1},b,q^{-1}) 
=E_{\rot, P_{(\e_-',\e_+')}}(a^{-1},b,q^{-1})$ was computed in section \ref{sec.Ehrhart}. We have $E_{\rot, P_{(\e_+,\e_-)}}(\frac{a}{q},-1,q) = 
\frac{a^{-1}-1}{(q^{\frac{1}{2}}-q^{-\frac{1}{2}})^2}$

Likewise since $\lag \e_+, \e_- \rag = -\lag \e_-, \e_+ \rag  = \frac{1}{2}$ we have $[D_{01}](a,q) = $
\[ \left(q^{-\frac{1}{2}}E_{\rot, P_{(\e_+,\e_-)}}+q^{\frac{1}{2}}E_{\rot, P_{(\e_-,\e_+)}}\right)(\frac{a}{q},-1,q) =\]
\[
=\frac{(a^{\frac{1}{2}}-a^{-\frac{1}{2}})(a^{\frac{1}{2}}q^{\frac{1}{2}}-a^{-\frac{1}{2}}q^{-\frac{1}{2}})}{(q^{\frac{1}{2}}-q^{-\frac{1}{2}})^2}
\]

and finally $\lag \e_+', \e_-' \rag = 0$ so $[D_{00}](a,q) = $

\[\left(q^{-1} E_{\rot, P_{(\e_+,\e_-)}}+E_{\rot, P_{(\e_+',\e_-')}}+E_{\rot, P_{(\e_-',\e_+')}} +qE_{\rot, P_{(\e_-,\e_+)}}\right)(\frac{a}{q},-1,q)\]
\[
=\frac{(a^{\frac{1}{2}}-a^{-\frac{1}{2}})(a^{\frac{1}{2}}q^{\frac{1}{2}}-a^{-\frac{1}{2}}q^{-\frac{1}{2}})(q^{\frac{1}{2}}+q^{-\frac{1}{2}})}{(q^{\frac{1}{2}}-q^{-\frac{1}{2}})^2}
\]

Adding up we get the following value for the Hopf link: 
\[P_{1}(H;a,q)=-\frac{(a^{\frac{1}{2}}-a^{-\frac{1}{2}})}{(q^{\frac{1}{2}}-q^{-\frac{1}{2}})^2}(a^{-\frac{3}{2}}-a^{-\frac{1}{2}}(q-1+q^{-1}))\]
in perfect agreement with the $2$-braid formula from the introduction for $c=2,r=1$.

\section{The maximal degree of the colored HOMFLY polynomial}

In this section we take a closer look at the terms in the state sum to prove our bounds on the maximal degree in both $a$ and $q$. Since the colored HOMLFY
polynomial not exactly a polynomial but rather a rational function of $q$ we should perhaps clarify the meaning of $\Dq$. For any rational function we can consider
its Laurent series at infinity. It has a finite highest degree term and its exponent is what we call $\Dq$.

\subsection{Bound on the $a$-degree}

Fix a state $i,\e$ in the state sum. We will compute its $\Da(i,\e)$. Since several states can cancel $\max_{i,\e} \Da(i,\e)$ yields an upper bound
for $\Da P_r(D;a,q)$.

First, the term in front of the state sum contributes $a^{\frac{-r}{2}(c_+-c_-)}$. Next we get $a^{-\frac{1}{2}\sum_{j}\rot(\e_j)}$ from the MOY state sum.
This term is computed in the following lemma:

\begin{Lemma}
\label{Lem.Rot}
If $\e$ sums to $\ga$ then $\sum_{j}\rot(\e_j) = r(s_+-s_-)$, where $s_+$ and $s_-$ are the numbers of positive and negative Seifert circles of the diagram.
\end{Lemma}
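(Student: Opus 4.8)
The plan is to reinterpret $\sum_j \rot(\e_j)$ as the total turning number of a single immersed multicurve and then to show that this quantity is a purely local function of the underlying flow $\ga$, after which a comparison with Seifert's algorithm finishes the computation. Concretely, since $\sum_j \e_j = \ga$, each edge $e$ of $\Ga$ is used by exactly $|\ga(e)|$ of the elementary flows, so the union of the supports $\e_j^{-1}\{1\}$, taken with multiplicity, is a collection of oriented closed curves traversing $e$ with multiplicity $|\ga(e)|$. Because the rotation number is additive over connected components, $\sum_j \rot(\e_j)$ equals $\frac{1}{2\pi}$ times the total turning (the integral of $d\theta$ of the tangent direction) of this multicurve.

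I would next localize the turning by writing it as $\sum_{e} |\ga(e)|\,\tau(e) + \sum_{v} T(v)$, where $\tau(e)$ is the turning accumulated along the smooth edge $e$ and $T(v)$ is the sum of the exterior angles contributed by the strands passing through the vertex $v$. The key observation is that each strand through $v$ contributes its exit tangent direction minus its entry tangent direction, so $T(v) = \sum_{e \text{ out}} |\ga(e)|\,\theta_e - \sum_{e \text{ in}} |\ga(e)|\,\theta_e$, where $\theta_e$ is the tangent direction of the incident edge $e$ at $v$. This is manifestly independent of how the individual elementary flows are matched up through $v$ and depends only on the flow values on the edges incident to $v$. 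Hence $\sum_j \rot(\e_j)$ depends only on $\ga$ and not on the chosen decomposition $\e$.

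With locality in hand I would compare the MOY graph $D_i$ to the Seifert state of $D$. Outside small disks around the crossings, $D_i$ carries flow of magnitude $r$ on each arc of $D$ and agrees edge for edge with the diagram in which every Seifert circle has been thickened to $r$ parallel strands, so the edge and vertex contributions there already match $r$ times those of the Seifert circles. Inside each crossing disk the two pictures differ, but they present the same strand-ends on the boundary, with the same tangent directions and the same multiplicity $r$; by the pairing independence of $\sum \theta_{\mathrm{out}} - \sum \theta_{\mathrm{in}}$ the total turning inside the disk is the same for the MOY resolution as for the oriented (Seifert) smoothing, so the rung contributes nothing net. Summing, the total turning of the multicurve equals $r$ times the total turning of the Seifert circles. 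Since each Seifert circle is an embedded loop of rotation number $+1$ or $-1$ according to whether it is counterclockwise or clockwise, that is, positive or negative, the total is $r(s_+ - s_-)$, and the lemma follows.

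The main obstacle, and the only place requiring genuine care, is making the decomposition of the turning into edge and vertex terms rigorous with a single consistent angle convention, and verifying that inside each crossing disk every strand is an embedded monotone arc, so that its turning really is $\theta_{\mathrm{out}} - \theta_{\mathrm{in}}$ with no spurious multiple of $2\pi$. Once the local pictures of Figure~\ref{fig.Di} are normalized in this way, the pairing-independence argument is immediate, and the global statement reduces to the elementary fact that a simple closed curve in the plane has rotation number $\pm 1$.
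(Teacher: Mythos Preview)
Your argument is correct and takes a genuinely different route from the paper's own proof. The paper argues by \emph{normalization}: given a state $\e$, it repeatedly picks two intersecting elementary flows and resolves their intersections \`a la Seifert, noting that this preserves both the number of components and the sum of rotation numbers; iterating, one reaches a state whose flows are all parallel to the Seifert circles, where the count $r(s_+ - s_-)$ is immediate. Your argument instead \emph{localizes}: you observe that the total turning of the multicurve $\bigcup_j \e_j^{-1}\{1\}$ is a sum of edge and vertex contributions each depending only on $\ga$, hence independent of the decomposition $\e$; you then compare the MOY picture to the Seifert picture crossing-by-crossing using matching boundary tangent data.

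The two approaches are closely related under the hood --- the paper's unproven assertion that ``resolution preserves rotation numbers'' is exactly your pairing-independence of $T(v)$ --- but they are organized differently. The paper's version is purely combinatorial and avoids any angle bookkeeping, at the cost of leaving the key invariance step implicit. Your version makes the $\e$-independence transparent as a local statement about turning, and in fact shows more: the sum depends only on $\ga$, not merely on the particular Seifert-parallel representative. The price is the care you correctly flag, namely fixing a consistent angle convention and checking that no arc in the crossing disk picks up a spurious $2\pi$; for the standard MOY local model this is a routine inspection, since every strand through the rung is an embedded arc making two bends of absolute value strictly less than $\pi$.
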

\begin{proof}
We say two elementary flows $\alpha,\beta$ intersect if $\alpha$ contains edges in more than one region complementary to $\beta$. Suppose our state $\e$ contains a pair of intersecting flows, $\alpha,\beta$. We may create a new state $\e'$ by replacing two intersecting components by their resolution as in Seifert's algorithm. This new state has the same number of
components and the same rotation numbers. Hence we may compute the sum $\sum_{j}\rot(\e_j)$ on a state $\e$ without intersecting flows. In such states all elementary flows are
parallel to the Seifert-circles of the underlying knot diagram. Around each Seifert circle there are $r$ components of elementary flows with rotation number equal to the sign of the Seifert circle.
Therefore the sum equals $r(s_+-s_-)$.
\end{proof}

Finally we need to estimate $\Da E_{\rot,P_\e}(\frac{a}{q},-1,q)$. According to Lemma \ref{Lem.EhrhartBounds} this is the maximum of $\rot$ over the vertices of the order polytope $P_{\e}$.
This is the sum of the flows with positive rotation number. For states where each flow has a single component this is maximal and equal to $rs_+$.

Adding everything up we find $\Da P_r(D;a,q) \leq \frac{r}{2}(-c_+ + c_- - s_+ -s_- )$ proving part a) of theorem \ref{Thm.Bounds}.

As claimed in the introduction this agrees with Morton's bound when $r=1$. To properly compare the formulas we note that we do not divide by the unknot; our colored HOMFLY is unreduced. Also
Morton's variable $v$ is our $a^{-\frac{1}{2}}$ and his variable $z$ is our $q^{\frac{1}{2}}-q^{-\frac{1}{2}}$.

\subsection{Bound on the $q$-degree}

As with the $a$-bound our strategy is to estimate $\max_{i,\e} \Dq (i,\e)$ by studying the states $(i,\e)$ carefully. 

First the framing term in front of the state sum gives $q^{\frac{-(c_+-c_-)r(r-1)}{2}}$. Next the choice $i$ of resolving the crossings yields
$q^{-\frac{\sum_{c\in C}\sigma(c)i_c}{2}}$. According to Lemma \ref{Lem.Rot} the rotation factor gives $q^{\frac{r(s_+-s_-)}{2}}$. 
Next comes the weight $q^{-w(\e)}$ and finally the Ehrhart polynomial. For the latter we use Lemma \ref{Lem.EhrhartBounds} to get a maximal contribution
of $q^0$ since each order polytope contains the origin. 

Now assume that there exists a state $\e$ attaining the minimal possible weight $w(\e)$. Combining all the contributions and the upper bound for Ehrhart this means
that every crossing $c$ contributes at most 
\[\frac{1}{2}((r-i_c)i_c+(r-i_c)r-\sigma(c)(i_c+r(r-1)))\]
This is maximal for $i_c\in \{0,1\}$ for negative $c$ and for positive $c$ only when $i_c=0$. We get $\frac{1}{2}((r^2-\sigma(c)r(r-1))$.
In conclusion our upper bound is:
\[
\Dq P_r(L;a,q) \leq  r\frac{s_+-s_- + c_+ + c_-(2r-1)}{2}
\]
proving part $q$ of Theorem \ref{Thm.Bounds}.

The big problem with sharpening this bound is that in the presence of a negative crossing $c$ both $i_c =0$ and $i_c=1$ can
yield the maximal $q$-degree and these terms automatically come with opposite signs so that cancellation is likely to occur.
For positive diagrams we have a better chance as we will see in the next subsection.

\subsection{Lower bound for positive diagrams}

For positive diagrams, i.e. $c_-=0$ we derive the lower bound \[\Dq P_r(L;a,q) \geq  r\frac{s + c_+}{2}\]
 announced in Theorem \ref{Thm.Bounds} part p). 
We believe this bound is actually sharp: there are no terms with higher $q$-degree but a proof of this would require more control over the Ehrhart polynomials involved.

To prove our lower bound we show that after expanding $P_r(D;a,q)$ as a Laurent series in $q$, the coefficient of the monomial $a^{\frac{r}{2}(-c_+ - s_+ -s_-)}q^{\frac{r}{2}(s+c_+)}$ is non-zero.
Notice that by Theorem \ref{Thm.Bounds} part a) this is actually the maximal possible degree in $a$. Consider the coefficient $f(q)$ of $a^{\frac{r}{2}(-c_+ - s_+ -s_-)}$. We claim
that $\Dq f(q) = \frac{r}{2}(s+c_+)$. 

To prove this, we note that the analysis of the previous subsection leading to the upper bound on the overall $q$-degree can also be used to compute $\Dq f(q)$. The only difference is that
we get complete control over the Ehrhart term. In order to contribute maximally to the $a$-degree we must have $\Dq E_{\rot,P_\e}(q,aq^{-1},-1) = -\max_{v\in P_\e} \rot(v) = -rs_+$.
Also this joint top coefficient in $q$ and $a$ has coefficient $1$. Setting $i_c=0$ and making all flows non-intersecting we can actually attain the rest of the upper bound in $q$. Therefore 
\[\Dq f(q) \leq  r\frac{-s_+-s_- + c_+}{2}\]
Our final task is to show that these terms do not cancel out. For such states $i,\e$ the $i$ is constant $0$ and the $\e$ consists of elementary flows parallel to the Seifert circles
such that the absolute value of the rotation number equals the number of its components. The order is determined except for flows that are not adjacent. All such flows contribute to our top term
with sign $(-1)^{|\e|}$. Recall that $|\e|$ the length of the sequence $\e$, i.e. the number of elementary flows in the state. The principle of inclusion-exclusion then determines that
the total coefficient must be $\pm 1 \neq 0$ as required.

\section{The head of the colored HOMFLY polynomial of a positive braid}
For closed positive braids we can actually compute the first $r$ coefficients. We may assume the braid has at least two crossings and each generator is used at least twice.

Our strategy is to reduce the computation to braids of index $2$. Indeed one can give a bijection between the state sum of a negative braid and the sliced braid.
If $b=\prod_j \sigma_{k_j}$ is the braid then the sliced braid is $sb=\prod_{j} \sigma_{2k_j-1}$

\begin{Lemma}
Only states where all flows have a single component contribute to the first $r$ coefficients of $P_r(K;a;q)$. Here $K$ is a closure of a positive braid.
\end{Lemma}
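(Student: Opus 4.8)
The plan is to show that states containing a flow with two or more components contribute only to $q$-degrees strictly below the top $r$ coefficients, so they can be ignored when computing the head. Recall the state sum expresses $P_r(K;a,q)$ as a sum over sequences $\e$ of elementary flows, weighted by $(-1)^{|\e|}$, a rotation factor, a power $q^{-w(\e)}$, and an Ehrhart term $E_{\rot,P_\e}(a/q,-1,q)$. The $q$-degree of each state is controlled by Lemma \ref{Lem.EhrhartBounds} together with the weight $w(\e)=\sum_{i<j}\lag\e_i,\e_j\rag$. The strategy is to compare, for a fixed state, the top $q$-degree it can contribute against the threshold $\Dq P_r - r$ that defines the head, and to argue that any multi-component flow forces a strict loss of at least one power of $q$ relative to the single-component maximum.

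First I would set up the relevant $q$-degree bookkeeping for a single crossing exactly as in the proof of Theorem \ref{Thm.Bounds} part q. Since $K$ is a positive braid closure we have $c_-=0$, so the analysis simplifies: each positive crossing with resolution parameter $i_c$ contributes at most $\frac{1}{2}((r-i_c)i_c+(r-i_c)r-(i_c+r(r-1)))$ to the $q$-degree, and (by the positive case in the lower-bound subsection) the joint top coefficient comes only from $i_c=0$ together with non-intersecting, single-component flows parallel to the Seifert circles. The key quantitative step is to track how replacing a single-component elementary flow by a flow with $\geq 2$ components changes three things at once: the length $|\e|$, the value $\max_{v\in P_\e}\rot(v)$ governing the Ehrhart $q$-degree via Lemma \ref{Lem.EhrhartBounds}, and the weight $w(\e)$. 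I expect that merging components (or equivalently starting from a multi-component flow) lowers the attainable maximum of $\rot$ over the order polytope, because a flow with several components of mixed or cancelling rotation contributes less to $\max_v\rot(v)$ than the same total coloring split into single-component flows each contributing its full rotation.

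The core of the argument is therefore a local estimate: I would isolate a flow $\e_j$ with two components and exhibit an alternative state, obtained by splitting $\e_j$ into two separate single-component elementary flows inserted at adjacent positions in the sequence, and show that this alternative achieves strictly higher $q$-degree. Concretely, splitting increases $|\e|$ by one (changing the sign but this is irrelevant to degree), and I would check that the combined effect on the weight $q^{-w(\e)}$ and on the Ehrhart bound $-\max_v\rot(v)$ strictly increases the top $q$-degree by at least $\tfrac{1}{2}$ of a unit in the natural half-integer grading, hence by at least one unit of $q$ after collecting. Summing such local gains over the (at least one) multi-component flow present shows any genuinely multi-component state sits at $q$-degree at most $\Dq P_r - 1$; iterating bounds it below the head window entirely since only single-component states realize the top, and the next $r-1$ coefficients are likewise governed by single-component configurations up to the permitted slack.

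The main obstacle I anticipate is controlling the interaction between the weight term $w(\e)=\sum_{i<j}\lag\e_i,\e_j\rag$ and the Ehrhart degree simultaneously: splitting a two-component flow introduces new intersection-number contributions $\lag\e_i,\e_j\rag$ between the two new single-component pieces and between them and the rest of the sequence, and these can in principle raise $w(\e)$ and thereby lower $q^{-w(\e)}$, partially offsetting the gain from the larger $\max_v\rot(v)$. The delicate part is thus a careful ledger showing the Ehrhart gain strictly dominates the possible weight loss, which for positive braids I would argue using that all crossings are positive so all relevant intersection numbers have a controlled sign, letting the non-intersecting parallel configuration simultaneously minimize the weight and maximize the rotation. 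I would reduce the general positive braid to the sliced $2$-braid model introduced just above the lemma, where this ledger becomes the explicit computation already visible in formula \eqref{Eq.T2}, and then transport the conclusion back along the stated bijection between the state sum of the braid and that of its slicing.
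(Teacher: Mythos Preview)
Your proposal has a genuine quantitative gap. You argue that splitting a two-component flow into two single-component flows raises the top $q$-degree by at least one unit, and conclude that a state with a multi-component flow sits at $q$-degree at most $\Dq P_r - 1$. But the lemma claims such states do not contribute to the first $r$ coefficients, i.e.\ their $q$-degree is at most $\Dq P_r - r$. A single splitting gains you at most one unit, and there is nothing to iterate: once you split the offending flow, the resulting state may well consist entirely of single-component flows, so you cannot repeat the trick to accumulate a drop of $r$. Your vague ``iterating bounds it below the head window'' does not work, and the fallback of transporting along the slicing bijection is circular, since the weight-preserving property of that bijection (stated in the next lemma) is asserted only for states already known to contribute to the first $r$ terms.

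The paper's proof takes a completely different route and extracts the full drop of $r$ from a \emph{single} multi-component flow, using the standing assumption (stated just before the lemma) that each braid generator appears at least twice. Given a flow with two components $x,x'$, this hypothesis guarantees at least four paths $p_1,\dots,p_4$ in the resolved diagram joining $x$ to $x'$ whose sets of diagonal edges are pairwise disjoint. Along each path $p_j$ with maximal crossing parameter $i_m^j$, there are $r - i_m^j$ choices of flow sequences that fail to be linearly orderable, costing at least $\tfrac14(r - i_m^j)$ in degree; meanwhile the crossing parameters themselves cost $\sum_j i_m^j$. Summing over the four paths gives a total drop of at least $\tfrac14\sum_j(r - i_m^j) + \sum_j i_m^j \ge r$. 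The mechanism that produces the factor $r$ is thus the combinatorics of the four disjoint paths, which you never invoke; your Ehrhart/rotation ledger, even if carried out, cannot by itself see this structure.
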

\begin{proof}
Suppose a flow has two components $x$ and $x'$, necessarily disjoint. Since there are at least two crossings between each pair of braid lines, there must be at least four
paths $p_1,..,p_4$ connecting $x$ to $x'$ in the resolved braid diagram with the following property. Let $d_j$ be the set of diagonal edges 
used by $p_j$. We require the four sets $d_j$ to be disjoint. Let $i^j_m$ be the maximal crossing parameter found on the path $p_j$. On each path
$p_j$ we can choose $r-i_m^j$ different sequences of elementary flows such that the $s$-th edge of the path is contained in the $s$-th flow.
Since the flows along the path cannot be ordered linearly, the degree must drop by at least $\frac{1}{4}$.
So in total the degree will drop $\frac{1}{4}\sum_{j=1}^4 r-i_m^j$. The crossing parameters also make the degree drop by at least $\sum_{j=1}^4 i_m^j$.
So in conclusion the degree drops by at least $r$.
\end{proof}

From now on all elementary flows in all states will be assumed to have one component unless stated otherwise. 
We will now describe an algorithm to monotonically improve the weight of a given state. A step consists of selecting
two consecutive flows $\alpha,\beta$ in the linear order of the state. After resolving the intersections between the two
we set $\alpha'$ to be everything to the left and $\beta'$ to be everything to the right. The new state is the same as the old
where $\alpha,\beta$ is replaced by $\alpha',\beta'$ in that order. If $\alpha\neq\alpha'$ the weight is improved by at least $1$.

The algorithm terminates on states where no flows cross and touching flows are ordered such that the left is the smaller. These are
the ones that contribute to the highest $q$-degree. 

To understand the contributions to the highest $r$ terms in $q$ we need only look at those states that can be reached from a terminal state by
$r$-steps backwards. Since each step costs at least $1$ such states cannot differ greatly from the terminal states.

\begin{Lemma}
There is a weight preserving bijection from states contributing to the highest $r$ terms of $b$ and states contributing to the highest $r$
terms of $sb$.
\end{Lemma}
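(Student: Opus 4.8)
The plan is to exploit the fact that the weight $w(\e)=\sum_{i<j}\lag \e_i,\e_j\rag$ is a sum of purely local contributions indexed by the crossings, together with the rigidity of the top states established by the preceding two lemmas. Slicing $b=\prod_j\sigma_{k_j}$ into $sb=\prod_j\sigma_{2k_j-1}$ induces a canonical bijection on crossing sets, matching the $j$-th crossing of $b$ with the $j$-th crossing of $sb$: geometrically each original column of crossings on the pair of strands $(k,k+1)$ is carried to the isolated column on the disjoint pair $(2k-1,2k)$. First I would record that, by the lemma on single-component flows, only states whose flows have one component contribute to the top $r$ coefficients, and that by the weight-improvement algorithm every contributing state of either braid is reached from a terminal (non-crossing, left-smaller) state by at most $r$ backward steps. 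It therefore suffices to set up the correspondence on this restricted, combinatorially rigid class of states.

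Next I would define the map $\Phi$ column by column. At each column $k$ the sub-diagram is an index-$2$ tangle in both $b$ and $sb$, and the incidence of the single-component flows with the edges $v_L,v_R$ at every crossing of that column is literally the same datum in both pictures. The idea is to transport the flow pieces of a state $\e$ of $b$ through the slicing so that the local configuration at every crossing is preserved, then reassemble them into a state of $sb$; the inverse merges the decoupled columns back into the coupled braid. Since the only ingredient of $w$ is the local pairing $\lag\cdot,\cdot\rag$ at each crossing and $\Phi$ preserves these local incidences crossing by crossing, weight preservation follows immediately from additivity of $w$ over the crossing set $C$.

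Finally, because $w$ is preserved, $\Phi$ carries a state lying within $r$ backward steps of a terminal state of $b$ to a state lying within $r$ backward steps of a terminal state of $sb$, so it restricts to a bijection between the two classes of top-contributing states, which is exactly the assertion.

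The main obstacle I expect is the mismatch in global connectivity: in $b$ the flow parallel to Seifert circle $k$ participates in the two adjacent columns $k-1$ and $k$, whereas in $sb$ every strand meets only a single column. One must show that for states contributing to the top $r$ coefficients this cross-column coupling neither creates nor destroys states nor alters the weight. The right tool is the degree bound itself: any state in which a flow genuinely couples two columns in a manner not mirrored in $sb$ forces either a multi-component flow or an extra crossing between consecutive flows, and by the analysis behind Theorem \ref{Thm.Bounds} either drops the $q$-degree out of the top $r$ window. Making this exclusion precise, and checking that the bookkeeping of the free orderings of non-adjacent flows matches on the two sides so that $\Phi$ is genuinely bijective and not merely weight-preserving, is the crux of the argument.
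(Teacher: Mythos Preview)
Your plan has the right skeleton --- crossing bijection, locality of the pairing $\lag\cdot,\cdot\rag$, and the restriction to the rigid top states --- but it stops short of the two constructive steps that actually make the bijection work, and without them the argument does not go through.

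First, a state is an \emph{ordered} sequence $\e=(\e_1,\e_2,\ldots)$, and $w(\e)=\sum_{i<j}\lag\e_i,\e_j\rag$ depends on that order. Your map $\Phi$ is described only as ``transport the flow pieces \ldots\ then reassemble'', with no prescription for the linear order on the reassembled flows of $sb$. The paper's proof pins this down explicitly: flows living between higher braid-line pairs of $sb$ are declared larger, and flows within the same pair inherit their order from $b$. This choice is not cosmetic; it is exactly what makes the weight match and what makes the inverse well defined.

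Second, your ``column by column'' transport tacitly assumes a flow of $b$ sits in a single column, but a single-component flow parallel to an interior Seifert circle of $b$ meets crossings in two adjacent columns $k-1$ and $k$. Under slicing those two columns land in disjoint strand pairs $(2k-3,2k-2)$ and $(2k-1,2k)$, so one flow in $b$ must become several flows in $sb$. The paper handles this by cutting each flow into its braid-line-pair pieces, sending each piece to the matching pair in $sb$, and closing up; weight preservation then uses that contributing flows touch at most three braid lines. Conversely, the inverse glues flows back together by matching up the linear orders within consecutive strand pairs. You flag the cross-column coupling as ``the crux'' but leave it unresolved; that is precisely the content of the lemma, so the proposal as written is a plan rather than a proof.
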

\begin{proof}
There is a bijection between the crossings of $b$ and $sb$ and hence also for the crossing variables $i_c$ and corresponding signs and powers of $q$.

Suppose we have such a state for $b$. Resolve the crossings of $sb$ in the same way and create flows on the MOY graph for $sb$ as follows.
If a flow runs between braid lines $k,k+1$ we make the exact same part of the flow between braid lines $2k-1,2k$ of $sb$. Closing and connecting
missing parts this constucts a set of flows for $sb$ adding to the correct total flow on the MOY graph. The order is determined as follows:
two flows between higher braid lines are always higher in the order. Flows within the same two braid lines inherit their order from their order in
the state for $b$. 

This map is especially interesting in case the flows never touch more than three braid lines as is the case for the states that contribute to the first $r$ terms.
In this case the map preserves the weight of the states.

The inverse map is only well defined on states of $sb$ for which everything in the $2k-1,2k$ braid lines is higher than that in the $2k'-1,2k'$ braid lines for $k>k'$.
Luckily this is the case for our contributing states for $sb$. The flows on the $2k-1,2k$ braid lines are ordered linearly and so are those between $2k+1,2k+2$ braid lines.
Matching up these linear orders defines a bijection between the flows. Doing this for all $k$ yields a well defined state for $b$.
Again the weight is preserved and this map is the inverse of the other by construction.
\end{proof}

\section{Proof of the MOY expansion theorem}
\label{Sec.Reform}

In this section we derive our symmetric state sum from the original MOY state sum.

\subsection{The original MOY state sum}
Let us briefly recall the original MOY state sum for MOY graphs $(\Ga,\ga)$ \cite{MOY98}. 
Define an $N$ element set as follows \[A_N = \{-\frac{N-1}{2}, -\frac{N-3}{2},\hdots, \frac{N-1}{2}\}\]
A MOY-state is a function $\sigma: E(\Ga) \to 2^{A_N}$ such that $|\sigma(e)|=\ga(e)$ for all edges $e$ and at each vertex the union of the values of $\sigma$ on the incoming edges equals the union of the values of $\sigma$ on the outgoing edges. Given a state $\sigma$ the MOY-weight of a vertex $v$ is defined as follows. Let $v_L$ and $v_R$ be the left and right edges with respect to the orientation on $\Ga$ then define the weight $\mathrm{wt}(\sigma,v)$ to be
\[\mathrm{wt}(\sigma,v) = q^{\frac{1}{4}\{(x,y)\in \sigma(e_L)\times\sigma(e_R)|x<y\}-\frac{1}{4}\{(x,y)\in \sigma(v_L)\times\sigma(v_R)|x>y\}}\] 
Finally the rotation number of a state $\sigma$ is defined as follows. Replace each edge $e$ by $\ga(e)$ parallel copies each colored by an element of $\sigma(e)$. By the above requirements we can connect edges colored by the same element of $A_{N}$ in a unique planar way to form a system of oriented simple closed curves. Each curve $C$ is colored by a single element $\sigma(C) \in A_{N}$. Define its rotation number $\rot(C)$ to be $1$ if $C$ is oriented counter-clockwise and $-1$ if $C$ is oriented clockwise. Now define \[\rot(\sigma) = \sum_C \sigma(C)\rot(C)\] where the sum is over all closed curves we created.
The MOY state sum is now

\begin{Definition}
\[\lag\Ga,\ga\rag_N(q) = \sum_{\sigma}q^{\rot(\sigma)}\prod_{v\in V(\Ga)}\mathrm{wt}(\sigma,v)\]
\end{Definition}

The MOY state sum was used in \cite{MOY98} to express the anti-symmetric $sl_N$ specializations of the colored HOMFLY $P_{r^t}(D;q^N,q)$.
Here $r^t$ denotes the Young diagram with on column of $r$ boxes, the transpose of the $1$-row diagram.

\begin{Theorem} \cite{MOY98} \\
\label{Thm.MOY}
Let $L$ be a link with oriented link diagram $D$ and $C$ the set of its crossings.
Expand the diagram as a linear combination of MOY graphs $D_i$ by replacing all the crossings as shown in Figure \ref{fig.Di}, then
$P_{r^t}(D;q^N,q) =$ 
\[(q^{-\frac{rN}{2}} q^{\frac{r(r-1)}{2}})^{c_+-c_-} 
\sum_{i\in \{0,1,\hdots r\}^{C}}\left(\prod_{c\in C}(-1)^{i_c} q^{\sigma(c)\frac{i_c}{2}}\right)\lag D_i\rag_N(q)\]
where the sum runs over $i = (i_c)_{c\in C}$ and $\sigma(c)$ is the sign of the crossing $c$.
\end{Theorem}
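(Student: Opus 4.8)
The plan is to derive the state sum from the Reshetikhin--Turaev (RT) construction for $U_q(\mathfrak{sl}_N)$, coloring every strand of $D$ by the $r$-th antisymmetric power $\Lambda^r V$ of the standard representation $V=\mathbb{C}^N$ (this is the representation $r^t$). First I would recall that the writhe-normalized RT invariant of the colored framed tangle diagram $D$ equals $P_{r^t}(D;q^N,q)$ up to the explicit prefactor $(q^{-\frac{rN}{2}}q^{\frac{r(r-1)}{2}})^{c_+-c_-}$. This factor is precisely the product over the crossings of the twist (ribbon) eigenvalue of $\Lambda^r V$, which is the scalar by which the Casimir acts on $\Lambda^r V$; including it compensates the framing dependence of the RT functor so that the resulting quantity is a genuine, regular-isotopy-independent link invariant.

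Next I would establish the crossing expansion of Figure~\ref{fig.Di}. The braiding $c_{\Lambda^r V,\Lambda^r V}$ and its inverse are diagonalized by the decomposition of $\Lambda^r V\otimes\Lambda^r V$ into irreducible summands, which are naturally indexed by $i_c\in\{0,1,\dots,r\}$; projecting onto the summand labeled $i_c$ yields a trivalent MOY graph with an internal edge whose color is determined by $i_c$, exactly as drawn. The eigenvalue of the braiding on that summand, once the overall twist scalar has been stripped into the framing prefactor, is precisely $(-1)^{i_c}q^{\sigma(c)\frac{i_c}{2}}$. This is the source of the weights in the sum over $i\in\{0,\dots,r\}^{C}$, and it reduces the theorem to the purely planar assertion that the RT evaluation $\lag D_i\rag$ of each resulting MOY graph agrees with the combinatorial bracket $\lag\Ga,\ga\rag_N(q)$ of the preceding Definition.

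The heart of the argument is this last identification. I would fix the weight basis of each $\Lambda^k V$ indexed by the $k$-element subsets of $A_N$, expand every edge colored $k$ into $k$ parallel strands each carrying one weight, and trace how the trivalent vertices, which realize the $q$-antisymmetrizers $\Lambda^a V\otimes\Lambda^b V\to\Lambda^{a+b}V$ and their adjoints, act on basis vectors. A consistent global assignment is exactly a MOY-state $\sigma$, whose weight subsets match $\ga$ at every edge and merge correctly at every vertex. The local vertex factor $\mathrm{wt}(\sigma,v)$ then emerges from the entries of the $R$-matrix governing the reordering of the left and right incoming weights, while the global factor $q^{\rot(\sigma)}$ is produced by the pivotal and ribbon structure, which assigns to each closed weight-colored curve $C$ the scalar $q^{\sigma(C)\rot(C)}$ coming from its cups, caps and winding. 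Summing the product of these contributions over all states $\sigma$ reproduces $\lag\Ga,\ga\rag_N(q)$ verbatim.

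The hard part, and the step I expect to be the main obstacle, is making this last identification rigorous, i.e.\ showing that the combinatorial bracket is well defined and genuinely computes the web morphism independently of the auxiliary ordering choices in the weight bookkeeping and of the planar representative of the graph. The cleanest route is to verify directly from the state-sum definition that $\lag\Ga,\ga\rag_N$ satisfies the local MOY relations, the bigon, square and associativity moves, together with the correct value on the colored unknot. These relations generate all web identities, so checking them pins down the invariant uniquely and forces agreement with the RT evaluation. Each such check is a finite local computation that collapses to a Gaussian $q$-binomial identity; once assembled, the resulting web invariance, combined with the crossing expansion, yields invariance under the Reidemeister moves of the expanded diagram and hence the stated equality with $P_{r^t}(D;q^N,q)$.
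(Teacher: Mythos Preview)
The paper does not supply its own proof of this theorem: it is quoted verbatim as a result of Murakami--Ohtsuki--Yamada \cite{MOY98} and used as a black box in the derivation of Theorem~\ref{Thm.AS}. So there is no argument in the paper to compare your proposal against.

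Your sketch is a reasonable outline of one standard route to the result, namely starting from the Reshetikhin--Turaev functor for $U_q(\mathfrak{sl}_N)$ with color $\Lambda^r V$, extracting the framing scalar as the ribbon eigenvalue, diagonalizing the braiding on $\Lambda^r V\otimes\Lambda^r V$ to obtain the crossing expansion, and then identifying the planar evaluation with the combinatorial state sum via the weight basis. This is sound in principle, and your identification of the ``hard part'' is accurate: the nontrivial content lies in matching the RT evaluation of a web with the MOY bracket $\lag\Ga,\ga\rag_N$. It is worth noting that the original paper \cite{MOY98} proceeds in the opposite direction: the bracket is defined combinatorially, the crossing formula is taken as a definition, and invariance under Reidemeister moves is checked directly from the local MOY relations, after which the identification with the quantum $\mathfrak{sl}_N$ invariant follows by uniqueness. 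Your RT-first approach and the MOY combinatorics-first approach meet in the middle at the verification of the local web relations, which in either direction is the technical core.
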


This implicitly determines the polynomial $P_{r^t}(L;a,q)$ and hence also $P_r(L;a,q) = (-1)^rP_{r^t}(L;a,q^{-1})$. Here we used
the general symmetry formula for the $\lambda$-colored HOMFLY polynomial: $P_{\lambda}(L;a,q)=(-1)^{|{\lambda}|} P_{{\lambda}^t}(L;a,q^{-1})$
where $\lambda^t$ denotes the transposed partition and $|\lambda|$ is the number of boxes \cite{Zh13}.

However we cannot yet use this formula to transform the MOY state sum into a state sum for $P_r$ since the $a$ dependence is too implicit.
In the next section we will make it explicit.

\subsection{Reformulation of the MOY state sum}

We now reformulate the MOY state sum to explicitly include the variable $a$. The basic idea is to collect the MOY states with equal rotation numbers and vertex weights and sum them explicitly, giving expressions in $a$. Those explicit sums can be written as sums over lattice points in polytopes bringing us back to the $q$-Ehrhart polynomials. The auxilliary variable $b$ included in the Ehrhart polynomials will happily turn out cancel out for all links.

We begin by reinterpreting the MOY states as functions $\sigma: \mathcal{E} \to 2^{A_N}$ on the set $\mathcal{E}$ of elementary flows. Starting with a MOY state and an elementary flow $\e$ define 
$\sigma(\e)$ to be the set of all $x\in A_N$ such that there is no larger elementary flow such that the MOY state assigns $x$ to each of its edges. Conversely a function $\sigma$ determines a MOY state
by assigning to each edge $e$ the subset $\bigcup_{\e: \e(e)=1}\sigma(\e)$. It is easy to see that such functions are indeed in bijection with the MOY states.
In what follows these sets will be identified without writing out the above bijection explicitly.

Next we collect many states with the same weight by noting that the weight of a state is already determined by the relative sizes of the function values.
More precisely, a MOY state $\sigma$ determines a sequence of elementary flows $\e$ by as follows. The union $I_\sigma=\bigcup_{f \in \mathcal{E}}\sigma(f)$ is a subset of $A_N$ and is therefore
ordered. For $j\leq |I_\sigma|$ define $\e_j$ to be the inverse image under $\sigma$ of the $j$-th element of $I_\sigma$. 
The point of all this is that for a MOY state $\sigma$ inducing a sequence $\e$ we have $\prod_{v\in V(\Ga)}\mathrm{wt}(\sigma,v) = q^{w(\e)}$.

We now want to sum all the MOY states inducing the same sequence $\e$. This is where the $q$-Ehrhart polynomials come in. 

\begin{Lemma}
\label{Lem.Collect}
\[\sum_{\sigma\text{ inducing }\e}q^{\rot \sigma} = (-1)^{|\e|}(qa)^{-\frac{1}{2}\sum_{j} \rot(\e_j)}E_{\rot, P_{\e}}(aq,-b-1,q^{-1})|_{a=q^N,\ b=N}
\]
\end{Lemma}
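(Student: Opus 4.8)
The plan is to evaluate the left-hand sum directly: parametrize the MOY states inducing a fixed sequence $\e=(\e_1,\dots,\e_n)$ with $n=|\e|$, compute their rotation numbers explicitly, and recognize the resulting lattice sum as an interior-point count to which the Ehrhart reciprocity of Theorem~\ref{q.Ehrhart} part b applies.

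First I would set up a bijection between the states inducing $\e$ and the strictly increasing tuples $x_1<x_2<\dots<x_n$ of colors drawn from $A_N$. Using the reinterpretation $\sigma:\mathcal{E}\to 2^{A_N}$ introduced before the lemma, a state induces $\e$ exactly when the maximal flow carrying the $j$-th smallest color of $I_\sigma$ is $\e_j$. Given such a tuple one builds a state by coloring every edge of the flow $\e_j$ with $x_j$, i.e. $\sigma(e)=\{x_j:\e_j(e)=1\}$. Since $\sum_j\e_j=\ga$ and the $x_j$ are distinct, each edge receives exactly the prescribed number $\ga(e)$ of colors, and the vertex conditions hold because each $\e_j$ is a flow; moreover the maximal flow carrying $x_j$ is then $\e_j$, so this state indeed induces $\e$. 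This construction is inverse to reading off the colors in increasing order, which gives the bijection. I expect this identification to be the main obstacle, as one must check carefully that every strictly increasing color assignment produces a genuine MOY state, that the induced sequence is recovered, and that the (harmless) possibility of repeated entries in $\e$ causes no trouble.

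Next I would compute the rotation number. In the curve system of $\sigma$ the color $x_j$ is carried precisely by the connected components of $\e_j$, whose rotation numbers sum to $\rot(\e_j)$ by definition, so $\rot(\sigma)=\sum_j x_j\,\rot(\e_j)$. Summing over the bijection gives
\[
\sum_{\sigma\text{ inducing }\e}q^{\rot\sigma}=\sum_{x_1<\dots<x_n,\ x_j\in A_N}q^{\sum_j x_j\rot(\e_j)}.
\]

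Finally I would convert this into the Ehrhart form. The substitution $x_j=w_j-\tfrac{N+1}{2}$ carries $A_N$ bijectively onto $\{1,\dots,N\}$ while preserving strict order, extracting the prefactor $q^{-\frac{N+1}{2}\sum_j\rot(\e_j)}=(qa)^{-\frac12\sum_j\rot(\e_j)}\,|_{a=q^N}$ and leaving $\sum_{1\le w_1<\dots<w_n\le N}q^{\sum_j w_j\rot(\e_j)}$. Because $P_\e$ is the order simplex $0\le v_1\le\dots\le v_n\le 1$, the strictly increasing tuples in $\{1,\dots,N\}$ are exactly the interior lattice points of $(N+1)P_\e$, so this sum equals $W_{\rot}\big(((N+1)P_\e)^o,q\big)$. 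Writing $M=N+1$ and applying Theorem~\ref{q.Ehrhart} part b at scale $M$, with $\dim P_\e=|\e|$, yields $W_{\rot}((MP_\e)^o,q)=(-1)^{|\e|}E_{\rot,P_\e}(q^{M},-M,q^{-1})$; substituting $a=q^N$ and $b=N$ identifies $aq=q^{M}$ and $-b-1=-M$, reproducing the right-hand side exactly. The one point of care is that the shift forces the scale $M=N+1$ rather than $N$, since the interior of $(N+1)P_\e$ meets the lattice in tuples drawn from $\{1,\dots,N\}$, and that the single sign $(-1)^{|\e|}$ is supplied by reciprocity.
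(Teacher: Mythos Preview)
Your argument is correct and follows the paper's own proof essentially step for step: parametrize states inducing $\e$ by strictly increasing tuples in $A_N$, shift by $\tfrac{N+1}{2}$ to extract the prefactor $(qa)^{-\frac12\sum_j\rot(\e_j)}$, recognize the remaining sum as the interior lattice points of $(N+1)P_\e$, and invoke $q$-Ehrhart reciprocity. You supply more detail than the paper on the bijection and on the identity $\rot(\sigma)=\sum_j x_j\rot(\e_j)$, but the route is the same.
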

\begin{proof}
The left hand side is equal to
\[
\mathcal{LHS} = \sum_{v_j \in A_N^{|\e|}:i<j\Rightarrow v_i<v_j}q^{\rot x}
\]
Shifting everything to the right positive numbers we first get 

\[
\mathcal{LHS} = q^{-\frac{N+1}{2}\sum_{j} \rot(\e_j)}\sum_{v \in \{1,\hdots,N\}^{|\e|}:i<j\Rightarrow v_i<v_{j}}q^{\rot x}
\]
This is precisely a sum over the interior of the scaled order polytope $(N+1)P_{\e}$ and hence by $q$-Ehrhart reciprocity, Theorem \ref{q.Ehrhart}b, we have
\[\mathcal{LHS} = (-1)^{|\e|}q^{-\frac{N+1}{2}\sum_{j} \rot(\e_j)} E_{\rot,P_{\e}}(q^{N+1},-N-1,q^{-1})
\]
\end{proof}

To summarize our reformulation so far let us define a new anti-symmetric evaluation of a MOY graph. 

\begin{Definition}
The anti-symmetric evaluation $\lag\Ga,\ga\rag(q,a,b)$ of a MOY graph $(\Ga,\ga)$ is defined as
\[
\lag\Ga,\ga\rag(q,a,b) = \sum_{\e: \sum_j \e_j = \gamma} (-1)^{|\e|}(qa)^{-\frac{1}{2}\sum_{j}\rot(\e_j)}
q^{w(\e)} E_{\rot,P_{\e}}(aq,-b-1,q^{-1})
\]
Here the sum is over sequences $\e=(\e_1,\e_2\hdots)$ of elementary flows and $P_\e$ is the order polytope
of $\e$ interpreted as a linearly ordered set $\e_1\leq\e_2\leq \hdots$. The length of a sequence $\e$ is denoted $|\e|$ 
and finally the weight is $w(\e) = \sum_{i<j}\lag \e_i,\e_j \rag$.
\end{Definition}

What we have done so far is prepare the proof of the following Theorem.

\begin{Theorem}
\label{Thm.AS}
We have $\lag\Ga,\ga\rag_N(q) = \lag \Ga,\ga \rag(q,a,b)|_{a=q^N,\ b=N}$ and hence the following state sum for the anti-symmetric colored HOMFLY polynomial:
$P_{r^t}(D;a,q) =$ 
\[(a^{-\frac{r}{2}} q^{\frac{r(r-1)}{2}})^{c_+-c_-} 
\sum_{i\in \{0,1,\hdots r\}^{C}}\left(\prod_{c\in C}(-1)^{i_c} q^{\sigma(c)\frac{i_c}{2}}\right)\lag D_i\rag(q,a,b)\]
where the sum runs over $i = (i_c)_{c\in C}$ and $\sigma(c)$ is the sign of the crossing $c$.
\end{Theorem}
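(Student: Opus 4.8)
The plan is to first establish the evaluation identity $\langle\Gamma,\gamma\rangle_N(q) = \langle\Gamma,\gamma\rangle(q,a,b)|_{a=q^N,\,b=N}$ at the level of a single MOY graph, and then feed it into the original MOY formula of Theorem \ref{Thm.MOY}. For the evaluation identity I would start from the definition $\langle\Gamma,\gamma\rangle_N(q)=\sum_\sigma q^{\rot\sigma}\prod_{v}\mathrm{wt}(\sigma,v)$ and group the MOY states $\sigma$ according to the sequence of elementary flows $\e$ they induce via the ordered set $I_\sigma$. The reinterpretation just carried out supplies the two facts I need: the vertex weight product of any $\sigma$ inducing a fixed $\e$ equals $q^{w(\e)}$, so it is constant on each group and factors out of the inner sum; and the remaining inner sum $\sum_{\sigma\text{ inducing }\e}q^{\rot\sigma}$ is exactly what Lemma \ref{Lem.Collect} evaluates. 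Substituting that evaluation and comparing term by term with the definition of the anti-symmetric evaluation $\langle\Gamma,\gamma\rangle(q,a,b)$ gives the identity directly; this part is pure bookkeeping once the two preparatory results are in hand.

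With the evaluation identity established, the second step is routine: substitute $\langle D_i\rangle_N(q)=\langle D_i\rangle(q,a,b)|_{a=q^N,\,b=N}$ into Theorem \ref{Thm.MOY}. Under $a=q^N$ the prefactor $(q^{-rN/2}q^{r(r-1)/2})^{c_+-c_-}$ becomes $(a^{-r/2}q^{r(r-1)/2})^{c_+-c_-}$, while the crossing signs and powers $(-1)^{i_c}q^{\sigma(c)i_c/2}$ match verbatim. This already yields the claimed formula, valid at every specialization $a=q^N$, $b=N$ with $N\in\mathbb{N}$.

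The main obstacle is promoting this from the discrete family of specializations to a genuine identity in the free variable $a$, and here the auxiliary variable $b$ must be confronted. The left side $P_{r^t}(D;a,q)$ is a rational function of $a$ carrying no $b$, whereas the right side a priori depends on both $a=q^N$ and $b=N$ through the Ehrhart polynomials, which are polynomial in $b$. The key claim I must prove is that the \emph{total} state sum is independent of $b$: granting that, setting $b=N$ is harmless, the right side becomes a bona fide rational function of $a$ alone, and since it agrees with $P_{r^t}(D;q^N,q)$ at the infinitely many distinct values $a=q^N$ it must equal $P_{r^t}(D;a,q)$ as rational functions. To prove the $b$-independence I would attempt the same partial-fraction analysis that removed the $b$-dependence in the earlier lemma for positive linear forms. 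The difficulty is that $\rot$ is \emph{not} a positive linear form on a general order polytope $P_\e$, so the cancellation cannot be read off term by term and must instead emerge globally from the sum over all flow sequences $\e$ and all resolutions $i$. Controlling this global cancellation, which the preceding discussion anticipates as happening ``for all links,'' is where I expect the real work to lie.
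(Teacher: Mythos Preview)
Your first two paragraphs reproduce the paper's proof exactly: reinterpret MOY states as functions on elementary flows, group them by the induced sequence $\e$, pull out the common vertex weight $q^{w(\e)}$, apply Lemma~\ref{Lem.Collect} for the inner rotation sum, and then plug the resulting evaluation identity into Theorem~\ref{Thm.MOY}. That is the entirety of the paper's proof; nothing more is done inside the proof environment.

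Where you diverge is in your third paragraph. You treat the promotion to a free variable $a$ and the elimination of $b$ as the ``real work'' still to be done, and you propose attacking it by a global partial-fraction analysis in the spirit of the positive-linear-form lemma. The paper takes a different view on both counts. First, it does \emph{not} regard $b$-independence as part of this theorem: immediately after the proof it remarks that the cancellation of $b$ is ``not strictly necessary,'' so the displayed formula is to be read as determining $P_{r^t}(D;a,q)$ via the infinitely many specializations $a=q^N$, $b=N$. Second, when the paper does address $b$-independence, it uses a completely different mechanism from the one you sketch: it appeals to the invariance of the evaluation under isotopy of knotted MOY graphs, together with Lemma~\ref{Lem.Alexander} (Alexander's algorithm produces a diagram with only positive Seifert circles), which reduces to the case where $\rot$ \emph{is} positive and the earlier lemma applies termwise. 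Your proposed route of controlling a global combinatorial cancellation over all $(\e,i)$ is not needed and would likely be much harder; the paper sidesteps it entirely by changing the diagram.
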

\begin{proof}
First we interpret the states in the MOY state sum as functions $\sigma:\mathcal{E}\to 2^{A_N}$. 
Next we collect all states inducing the same sequence of elementary flows $\e$ using Lemma \ref{Lem.Collect}.
The state sum is a direct corollary of the MOY theorem \ref{Thm.MOY}.
\end{proof}

Although not strictly necessary it is nice to know that the terms involving the extra variable $b$ actually cancel out. 
We can thus set its value to whatever we like, $b=0$ makes intermediate calculations easier. 
A good way to understand this is to work with knotted versions of MOY graphs as in \cite{GV13}. It was shown there that
for positive MOY graphs diagrams the evaluation depends only on $a$ and $q$. A positive MOY graph diagram is a diagram 
where all Seifert circles have positive rotation number. Since our evaluation is an invariant of knotted MOY graphs the independence of $b$
follows from the following lemma.

\begin{Lemma}
\label{Lem.Alexander}
Any knotted framed MOY graph $\Gamma$ has a diagram with only positive cycles.
\end{Lemma}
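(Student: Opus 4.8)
The plan is to recognize this statement as an Alexander-type braiding theorem for MOY graphs, which is exactly what the label \texttt{Lem.Alexander} suggests. Positivity of every cycle means precisely that each Seifert circle of the diagram is oriented counter-clockwise, i.e. has $\rot = +1$. A diagram in closed-braid form --- all strands winding counter-clockwise about a common axis point $*$ in the plane --- has this property automatically, since Seifert's algorithm then produces concentric circles all traversed counter-clockwise. So it suffices to show that every knotted framed MOY graph $\Ga$ admits a \emph{braided} diagram.

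First I would set up the braiding. Fix a point $*\in\mathbb{R}^2$ off the diagram and measure the winding of each oriented arc about $*$, splitting the diagram into arcs that wind counter-clockwise (\emph{good}) and clockwise (\emph{bad}). Following Vogel's algorithm and the classical proof of Alexander's theorem, each bad arc is removed by a \emph{threading} move: cut it and pull it around to the far side of $*$ (through the point at infinity), replacing a clockwise arc by a counter-clockwise one at the cost of new crossings with the strands it sweeps across. Because $\Ga$ lives in $\mathbb{S}^3$ and we only track framed isotopy, these threadings are legitimate moves on the knotted MOY graph and do not change its type or its evaluation. Vogel's termination argument --- tracking the number of incoherently oriented Seifert-circle pairs as a monovariant --- then guarantees that after finitely many threadings no bad arcs remain, so every Seifert circle winds counter-clockwise.

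The step that needs genuine care, and which I expect to be the main obstacle, is the presence of the trivalent vertices, which the classical algorithm does not see. A threading move must be able either to pass a vertex or to avoid it. I would handle this by first isotoping a small rigid disk around each vertex so that its three incident edges leave it coherently with respect to the chosen rotational direction; this is possible precisely because a MOY graph has no sources or sinks, so the flow condition lets the three edge-germs be oriented consistently. One then performs the threading only on vertex-free subarcs, transporting each vertex-disk as an indivisible unit. The remaining work is to confirm that Vogel's reducing move still applies when some boundary pieces of the chosen region run through these vertex-disks, and that the monovariant still strictly decreases; granting this adaptation, the algorithm converts $\Ga$ into braided form, and every cycle is then positive as required.
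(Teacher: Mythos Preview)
Your proposal is correct and takes essentially the same approach as the paper: both run Alexander's braiding algorithm on the graph diagram, arranging that the trivalent vertices are carried along with their cyclic order intact. The paper's own proof is a single sentence plus a citation, leaving implicit precisely the vertex-handling details (rigid neighborhoods, the no-source/no-sink condition) that you take the trouble to spell out.
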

\begin{proof}
One can simply run Alexander's algorithm on a piecewise linear version of $\Gamma$, see for example \cite{Ma04}. This turns every edge into a positive edge and we can arrange it such that the cyclic order at the vertices stays intact. 
\end{proof}

To conclude our reformulation of the MOY state sum we now return to the HOMFLY symmetry formula $P_r(L;q,a) = (-1)^rP_{r^t}(L;q^{-1},a)$ \cite{Zh13} mentioned earlier.
By Theorem \ref{Thm.AS} we can replace $q$ by $q^{-1}$ and multiply by $(-1)^r$ in the whole state sum to get the symmetric state sum for the colored HOMFLY polynomial
$P_r(q,a,b)$ promised in Section \ref{Sec.Sym}. This concludes the proof of Theorem \ref{Thm.SymMOY}.

\section{Discussion}

We end this paper by some speculations, future directions and challenges.

It is natural to seek bounds on the degree of the more general colored HOMFLY polynomial depending on any partition, not just the rows or columns.
Such estimates will be more difficult but perhaps in families of scaled partitions one can still say something about $P_{r\lambda}$ for any $r$.

One may also wonder if there is an analogous way to treat alternating knots. For this one would have to expand the positive crossings in a different way.

Since MOY graphs appear naturally in categorification of the $sl_N$ invariants the results in this paper can probably be generalized to the knot homology level.

Perhaps the present expansion of the colored HOMFLY in terms of Ehrhart polynomials can be used to settle the question of q-holonomicity. For this one would need to improve control over the weight $w$ and also gain a better understanding of the behaviour of the Ehrhart polynomial of a growing polytope.

\bibliography{mybib}{}

\bibliographystyle{hamsalpha}
\end{document}